\theoremstyle{definition} \newtheorem{definition}{Definition}[section]
\theoremstyle{definition} \newtheorem{remark}[definition]{Remark}
\theoremstyle{plain} \newtheorem{lemma}[definition]{Lemma}
\theoremstyle{plain} \newtheorem{proposition}[definition]{Proposition}
\theoremstyle{plain} \newtheorem{theorem}[definition]{Theorem}
\theoremstyle{plain} \newtheorem{corollary}[definition]{Corollary}
\theoremstyle{definition} 
\theoremstyle{plain} 
\theoremstyle{definition}
\DeclareMathOperator{\BV}{BV}
\DeclareMathOperator{\dist}{dist}
\DeclareMathOperator{\supp}{supp}
\newcommand{\R}{\mathbb{R}}
\newcommand{\N}{\mathbb{N}}
\newcommand{\TV}{\text{\rm Tot.Var.}}
\newcommand{\e}{\varepsilon}
\newcommand{\loc}{\text{\rm loc}}
\newcommand{\D}{\mathcal{D}}
\newcommand{\M}{\mathcal{M}}
\renewcommand{\L}{\mathscr L}
\renewcommand{\S}{\mathbb S}
\newcommand{\Osc}{\mathrm Osc}
\newcommand{\h}{\mathrm height}
\renewcommand{\div}{\mathrm{div}}
\renewcommand{\L}{\mathscr L}
\renewcommand{\H}{\mathscr H}
\numberwithin{equation}{section} 
\theoremstyle{plain} \newtheorem*{theorem*}{Theorem}
\theoremstyle{plain} 
\theoremstyle{plain} \newtheorem*{mthm*}{Main Theorem}
\theoremstyle{plain} \newtheorem*{conjecture*}{Conjecture}
\theoremstyle{plain} 
\theoremstyle{plain} \newtheorem*{problem*}{Problem}
\title{Differentiability properties of the flow of 2d autonomous vector fields}
\thanks{The author has been supported by the SNF Grant 182565.}
\author[E.~Marconi]{Elio Marconi}
\address{Elio Marconi, EPFL B, Station 8, CH-1015 Lausanne, CH.}
\email{elio.marconi@epfl.ch}
\begin{document}
	\maketitle
	
\begin{abstract}
We investigate under which assumptions the flow associated to autonomous planar vector fields inherits the Sobolev or BV 
regularity of the vector field. We consider nearly incompressible and divergence-free vector fields, taking advantage in both cases of the underlying Hamiltonian structure. Finally we provide an example of an autonomous planar Sobolev divergence-free vector field, such that the 
corresponding regular Lagrangian flow has no bounded variation.
\end{abstract}

\section{Introduction}
We consider bounded vector fields $b \in L^\infty \big((0,T)\times \R^d; \R^d\big)$.
Although the analysis of this paper is limited to the case of autonomous vector fields with $d=2$, we introduce the relevant notions and the related results in the general setting. The following notion of \emph{regular Lagrangian flow} is an appropriate extension for merely locally integrable vector fields of the classical flow associated to Lipschitz vector fields.

\begin{definition}
Given $b \in L^1_\loc((0,T)\times \R^d;\R^d)$, we say that $X:[0,T)\times \R^d\to \R^d$ is a \emph{regular Lagrangian flow} of the vector field $b$ if
	\begin{enumerate}
	\item for $\mathscr L^d$- a.e. $x\in \R^d$ the map $t\mapsto X(t,x)$ is absolutely continuous, $X(0,x)=x$ and for $\mathscr L^1$-a.e. $t\in (0,T)$ it holds $\partial_tX(t,x)=b(t,X(t,x))$;
	\item for every $t \in [0,T)$ it holds
	\begin{equation*}
	X(t,\cdot)_\sharp \mathscr L^d \le L\mathscr L^d,
	\end{equation*}
	for some $L>0$.
	\end{enumerate}
\end{definition} 

Regular Lagrangian flows have been introduced in a different form in \cite{DL_transport}, where the authors proved their existence and uniqueness for vector fields
$b \in L^1_tW^{1,p}_x$ with $p\ge 1$ and bounded divergence. 
The theory has been extended to vector fields $b \in  L^1_t \BV_x$ with bounded divergence in \cite{Ambrosio_transport}. 
Uniqueness of regular Lagrangian flows was finally achieved in the more general class of nearly incompressible vector fields with bounded variation in \cite{BB_Bressan}, introduced in the study of the hyperbolic system of conservation laws named after Keyfitz and Kranzer (see \cite{DL_notes}).
\begin{definition}\label{D_NI}
A vector field $b \in L^1_\loc ((0,T)\times \R^d;\R^d)$ is called \emph{nearly incompressible} if there exist $C>0$ and $\rho \in C^0([0,T); L^\infty_w(\R^d))$ solving the continuity equation
\begin{equation}\label{E_CE}
\partial_t \rho + \div_x(\rho b)=0
\end{equation}
with $\rho(t,x) \in [C^{-1},C]$ for $\L^{d+1}$-a.e. $(t,x)\in (0,T)\times \R^d$.
\end{definition}

Several results about the differentiability properties of regular Lagrangian flows are available now.
By the contributions in \cite{LL_differentiability, AM_differentiability}, it follows that regular Lagrangian flows associated to vector fields $b \in L^1_t W^{1,1}_x$ are \emph{differentiable in measure}
(see \cite{AM_differentiability} for the definition of this notion). The same regularity property has been obtained recently in \cite{BD_differentiability} for nearly incompressible vector fields with bounded variation.
The stronger property of \emph{approximate differentiability} was obtained in \cite{ALM_differentiability} for regular Lagrangian flows associated to vector fields $b \in L^1_t W^{1,p}_x$ with $p>1$. A quantitative version of the same regularity property was provided in \cite{CDL_DiPerna-Lions}, where the authors proved a quantitative Lusin-Lipschitz regularity of the flow.

The optimality of the regularity estimates obtained in \cite{CDL_DiPerna-Lions} is discussed in \cite{Jabin_example}.
In particular the author provided through a random construction an example of time dependent divergence-free Sobolev vector field in $\R^2$ such that the regular Lagrangian flow has not bounded variation.

\subsection{2d autonomous vector fields}
The analysis in the setting of 2d autonomous vector fields is facilitated by the following Hamiltonian structure: if $b \in L^\infty(\R^2;\R^2)$ with $\div \,b=0$, then there exists a Lipschitz Hamiltonian $H:\R^2 \to \R$ such that 
\begin{equation}\label{E_Hamiltonian}
b = \nabla^\perp H = (-\partial_2 H, \partial_1 H).
\end{equation}
At least formally the Hamiltonian is preserved by the flow, so that the trajectories of the flow are contained in the level sets of $H$.
In the series of papers \cite{ABC1,ABC2,ABC3}, the authors reduced the uniqueness problem for the continuity equation to a family of one-dimensional problems on the level sets of $H$. 
With this approach they were able to characterize the Hamiltonians for which the uniqueness for \eqref{E_CE} holds in the class of $L^\infty$ solutions, 
and therefore the uniqueness for the regular Lagrangian flow, including in particular the case of BV vector fields.

It is worth to mention that, before the general result in \cite{BB_Bressan} was available, the approach introduced above allowed to obtain in \cite{BBG_NI2d} a simpler and more direct proof of
the uniqueness of regular Lagrangian flow for nearly incompressible vector fields with bounded variation; see also \cite{BG_steadyNI} for the intermediate step of steady nearly incompressible vector fields, namely vector fields satisfying Def. \ref{D_NI} with $\rho$ constant in time.

The approximate differentiability of the flow has been obtained for autonomous divergence free vector field $b \in \BV(\R^2;\R^2)$ in \cite{BM_Lusin-Lip}, as a consequence of a suitable Lusin-Lipschitz property.

In the present paper we investigate under which assumptions the regular Lagrangian flow inherits the Sobolev or BV regularity of 
the vector field.
The first result is a local estimate for nearly incompressible vector fields.

\begin{proposition}\label{P_local}
Let $b\in \BV(\R^2;\R^2)$ be a bounded nearly incompressible vector field and let $\Omega\subset \R^2$ be an open ball of radius $R>0$ such that there exist $\delta>0$ and $e\in \S^1$ for which $b\cdot e >\delta$ a.e. in $\Omega$.
Let $\Omega' \subset \Omega$ be an open set and $\bar t>0$ be such that $\dist(\Omega', \partial \Omega)>\|b\|_{L^\infty}\bar t$.
Then 
\begin{equation*}
X(\bar t) \in \BV(\Omega').
\end{equation*}
Moreover, if $b \in W^{1,p}(\R^2;\R^2)$ for some $p\ge 1$, then
\begin{equation*}
X(\bar t) \in W^{1,p}(\Omega').
\end{equation*}
\end{proposition}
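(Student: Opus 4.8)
The plan is to straighten the flow via the change of variables induced by the transversality hypothesis $b\cdot e>\delta$ and to transfer the regularity of $b$ to $X$ through this explicit coordinate change. Without loss of generality take $e=e_1$, so that the horizontal component $b_1$ satisfies $b_1>\delta$ a.e.\ in $\Omega$; since $b$ is nearly incompressible and divergence-free (after a reduction, or directly in the divergence-free case), near each point of $\Omega$ the flow lines are graphs over the $x_1$-axis. Concretely, I would introduce the Hamiltonian-type function obtained by integrating the ODE $\frac{dx_2}{dx_1}=\frac{b_2}{b_1}$: on the slab $\Omega$ define $\Phi(x_1,x_2)$ to be the $x_2$-coordinate at horizontal position equal to some fixed reference $x_1^0$ of the integral curve of $b/b_1$ through $(x_1,x_2)$. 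Then $(x_1,x_2)\mapsto(x_1,\Phi(x_1,x_2))$ is a bi-Lipschitz change of variables on $\Omega$ (with constants depending on $\delta$ and $\|b\|_{L^\infty}$), it conjugates the flow of $b$ to a flow that moves only in the first coordinate, and — this is the crucial point — its regularity is exactly that of $b$: the difference quotients of $\Phi$ in $x_2$ solve a linearized ODE driven by $\nabla b$, so $\Phi\in W^{1,p}$ when $b\in W^{1,p}$, $p\ge 1$, and $\Phi\in\BV$ when $b\in\BV$ (the $\BV$ bound uses that the distributional derivative of $\Phi$ is the push-forward along Lipschitz fibers of a measure controlled by $|Db|$).

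In the straightened coordinates $(y_1,y_2)=(x_1,\Phi(x))$ the flow becomes $Y(\bar t, y)=(\gamma(\bar t,y),y_2)$ for a scalar function $\gamma$, where along each fiber $\{y_2=\text{const}\}$ the trajectory solves a one-dimensional ODE $\dot\gamma=\tilde b_1(\gamma,y_2)$ with $\tilde b_1$ the first component of $b$ read in the new variables, still bounded below by $\delta$. Here I would invoke the one-dimensional theory: for a scalar ODE with speed bounded away from $0$, the time-$\bar t$ map is monotone in the space variable with a derivative given by an exponential of the integral of $\partial_{y_1}\tilde b_1$ along the trajectory, so $\gamma(\bar t,\cdot)$ inherits $W^{1,p}$ resp.\ $\BV$ regularity fiberwise, with estimates uniform in $y_2$ on account of the density bounds from near incompressibility (which prevent concentration of the relevant measures). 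Slicing characterizations of $\BV$ and $W^{1,p}$ then upgrade the fiberwise bounds to a genuine $\BV$ resp.\ $W^{1,p}$ bound for $Y(\bar t,\cdot)$ on the straightened image of $\Omega'$; the condition $\dist(\Omega',\partial\Omega)>\|b\|_{L^\infty}\bar t$ guarantees that all trajectories starting in $\Omega'$ stay inside $\Omega$ up to time $\bar t$, so the construction is not spoiled by boundary effects.

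Finally I would undo the change of variables: $X(\bar t,\cdot)=\Psi^{-1}\circ Y(\bar t,\cdot)\circ\Psi$ where $\Psi(x)=(x_1,\Phi(x))$. Since $\Psi$ and $\Psi^{-1}$ are bi-Lipschitz and lie in $W^{1,p}$ resp.\ $\BV$, and since composition of a $W^{1,p}\cap L^\infty$ (resp.\ $\BV\cap L^\infty$) map with a bi-Lipschitz map preserves these classes — using the chain rule for Sobolev functions and, in the $\BV$ case, Vol'pert's chain rule together with the area formula to control $|D(X(\bar t))|$ by $\|D\Psi^{-1}\|\,|D(Y(\bar t))|\circ\Psi\cdot|D\Psi|$ — we conclude $X(\bar t)\in W^{1,p}(\Omega')$, resp.\ $X(\bar t)\in\BV(\Omega')$.

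I expect the main obstacle to be the $\BV$ bookkeeping at two junctures: first, showing that the straightening map $\Phi$ is genuinely $\BV$ (not merely that its fibers are), which requires controlling the full Jacobian of $\Phi$ by $|Db|$ and using the coarea/area formula to pass from fiberwise total variation to the planar total variation without losing the jump and Cantor parts; and second, the composition step in the $\BV$ category, where one must be careful that the bi-Lipschitz change of variables does not create spurious singular mass — here Ambrosio's theory of $\BV$ functions under Lipschitz change of variables and the fact that $\Psi$ has a.e.\ invertible approximate differential do the job. The $W^{1,p}$ statements follow the same scheme but are technically lighter, as the chain rule and change-of-variables formulas are classical for Sobolev functions composed with bi-Lipschitz maps.
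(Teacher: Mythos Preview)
Your outline follows the right geometric intuition --- exploit the transversality $b\cdot e>\delta$ to view the flow lines as Lipschitz graphs --- but the emphasis is misplaced and the central estimate is missing.

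Two concrete gaps. First, the parenthetical ``after a reduction'' hides a genuine step: passing from nearly incompressible to \emph{steady} nearly incompressible, i.e.\ finding $r\in[\tilde C^{-1},\tilde C]$ with $\div(rb)=0$ on $\Omega$, so that a Lipschitz Hamiltonian $H$ with $rb=\nabla^\perp H$ exists. The paper devotes two lemmas to this (extending the density $\rho$ to arbitrarily large times, then time-averaging a suitably restricted $\rho$ over $[0,T]$ to produce a stationary $r$). Without this, your $\Phi$ is \emph{defined via the very flow you are trying to control}, which is circular; with it, $\Phi$ is just a Lipschitz reparametrization of $H$ and hence automatically bi-Lipschitz --- so your discussion of $\Phi$ inheriting $\BV$/$W^{1,p}$ regularity from $b$ via a linearized ODE is beside the point (and in the $\BV$ case one cannot linearize anyway).

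Second, and more seriously: your regularity argument for $Y(\bar t,\cdot)=(\gamma(\bar t,\cdot),y_2)$ only treats the \emph{fiberwise} variable $y_1$. The one-dimensional fact you invoke (flow of a scalar autonomous ODE with speed bounded in $[\delta,\|b\|_\infty]$) actually gives bi-Lipschitz dependence on $y_1$ for free, regardless of any regularity of $b$; but slicing characterizations of $\BV$ and $W^{1,p}$ require control in \emph{both} coordinate directions, and you say nothing about $\partial_{y_2}\gamma$. This transversal dependence is exactly where $|Db|$ enters and is the entire content of the result. The paper handles it not by changing variables but by a direct pointwise Lusin--Lipschitz estimate
\[
|X(\bar t,z)-X(\bar t,z')|\le C'\bigl(|z-z'|+|g(H(z))-g(H(z'))|\bigr),
\]
with $g(h)=|Db|(\{H\le h\})$ in the $\BV$ case and $g(h)=\int_{\{H\le h\}}|Db|\,dz$ in the $W^{1,p}$ case, obtained by subtracting the two travel-time identities $\int_{x}^{X_1(\bar t,z)}\frac{ds}{b_1(\tilde f_h(s))}=\bar t$ on nearby level curves $h=H(z)$, $h'=H(z')$. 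The monotone one-variable function $g$ encodes the total variation of $b$ transversal to the level sets; one then checks directly (coarea plus Jensen in the Sobolev case) that $g\circ H\in\BV(\Omega')$, resp.\ $W^{1,p}(\Omega')$, which finishes the proof without any Vol'pert-type composition bookkeeping. Your straightening scheme can be completed, but the missing $y_2$-estimate is precisely this computation dressed in different coordinates.
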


The following global result is stated for divergence-free vector fields and we additionally assume that the vector field $b \in \BV(\R^2;\R^2)$ is continuous. Since we are going to consider bounded vector fields, by finite speed of propagation, it is not restrictive to assume that $b$ has compact support.
In particular there exists a unique Hamiltonian $H \in C^1_c(\R^2)$ satisfying \eqref{E_Hamiltonian} and it is straightforward to check that the set of critical values
\begin{equation*}
\mathcal S:= \{ h \in \R: \exists x \in \R^2 \left( H(x)=h \mbox{ and }b(x)=0 \right)\}
\end{equation*}
is closed.
Therefore the set of regular values $\mathcal R := H(\R)\setminus \mathcal S$ and $\Omega = H^{-1}(\R\setminus \mathcal S)= H^{-1}(\mathcal R)$ are open.
\begin{theorem}\label{T_global}
Let $b \in \BV(\R^2;\R^2)$ be a continuous divergence-free vector field with bounded support and let $\Omega$ be defined as above.
Then for every $t>0$ the regular Lagrangian flow has a representative  
\begin{equation*}
X(t) \in C^0(\Omega) \cap \BV(\Omega).
\end{equation*}
If moreover $b \in W^{1,p}(\R^2;\R^2)$, then $X(t) \in W^{1,p}(\Omega)$.
\end{theorem}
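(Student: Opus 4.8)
The strategy is to deduce the global result from the local Proposition \ref{P_local} by a covering argument on the open set $\Omega$ of regular level sets, taking advantage of the Hamiltonian structure. Let me think about what needs to happen.

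First, continuity. Since $b$ is continuous and $H \in C^1_c$, on $\Omega$ the gradient $\nabla H$ is non-vanishing, so by the implicit function theorem each connected component of each level set $\{H = h\}$, $h \in \mathcal R$, passing through $\Omega$ is locally a $C^1$ curve along which $b$ is a non-vanishing tangent field. The flow on such a curve is then the classical ODE flow, which is a homeomorphism of the curve onto itself (or onto another component of the same level set) depending continuously on the initial point; near a regular point one can straighten the picture via a $C^1$ change of coordinates (a "flow box") in which $H$ becomes one coordinate and arc-length-type parameter the other, and in these coordinates $X(t)$ is simply a shear that moves continuously with both the base point and the level, hence $X(t) \in C^0(\Omega)$. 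One must check that this local picture glues: trajectories stay in $\Omega$ for all time because $H$ is constant along them and $\Omega = H^{-1}(\mathcal R)$ is a union of level sets, so a trajectory starting in $\Omega$ never sees a critical point.

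Second, $\BV$ (resp. $W^{1,p}$) regularity. Here I would cover $\Omega$ by countably many balls $\Omega_i$ on each of which $b \cdot e_i > \delta_i$ for suitable $e_i \in \S^1$ and $\delta_i > 0$ — this is possible precisely because $b$ is continuous and non-zero on $\Omega$, so near any $x \in \Omega$ one picks $e_i = b(x)/|b(x)|$ and shrinks the ball. A divergence-free vector field is in particular nearly incompressible (take $\rho \equiv 1$), so Proposition \ref{P_local} applies on each $\Omega_i$: for a fixed target time $t$, choosing $\Omega_i' \Subset \Omega_i$ and subdividing the time interval into finitely many steps each short enough that $\dist(\Omega_i', \partial \Omega_i) > \|b\|_\infty \cdot (\text{step})$, we get $X$ at each intermediate time in $\BV$ (resp. $W^{1,p}$) on the relevant local set, and composing finitely many such maps (using the semigroup property $X(t) = X(t-s)\circ X(s)$ and the chain rule for $\BV$/Sobolev maps together with the quantitative Jacobian bound coming from the bounded compression $X(s)_\sharp \L^2 \le L \L^2$) yields $X(t) \in \BV_\loc(\Omega)$ (resp. $W^{1,p}_\loc$). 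Upgrading from $\BV_\loc$ to $\BV(\Omega)$ requires a uniform bound on the total variation, which one obtains by summing the local estimates from Proposition \ref{P_local} with constants controlled in terms of $\|b\|_{\BV}$, $\|b\|_\infty$, the compressibility constant $L$, and the lower bounds $\delta_i$; the geometry has to be arranged so that these contributions are summable (e.g. via a Besicovitch-type covering with bounded overlap, and noting that on the part of $\Omega$ where $|b|$ is small the relevant "time of first exit" and hence the number of composition steps can blow up — this is the delicate point).

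The main obstacle I anticipate is exactly this last point: controlling the flow uniformly near $\partial \Omega$, i.e. near the critical level sets, where $|b| \to 0$ and trajectories slow down without bound, so the number of local flow-box steps needed to reach time $t$ is not uniformly bounded and the local $\BV$ estimates could in principle accumulate. Resolving it should use the Hamiltonian structure more seriously than the crude covering above: one parametrizes $\Omega$ (or each of its connected components) by the pair $(h, \ell)$ with $h = H$ the conserved quantity and $\ell$ an arc-length parameter along the level curve, under which the flow becomes, on each level set $\{H = h\}$, a pure translation in $\ell$ by an amount $t$ that depends on $h$ in a $\BV$ way (this is where one-dimensional arguments on level sets, in the spirit of \cite{ABC1,ABC2,ABC3,BBG_NI2d,BM_Lusin-Lip}, enter). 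The variation of $X(t)$ in the transversal direction is then governed by the variation in $h$ of this translation amount, i.e. by $\partial_h \big(\text{period/length functions}\big)$, which one estimates via the coarea formula against $\|DH\| = \||b|\|$ and $\|Db\|$; the point is that the co-area weight $1/|\nabla H|$ interacts favorably with the smallness of $|b|$ so that the total variation stays finite. Once the uniform-in-$\Omega$ estimate is in hand, combining it with the local regularity and the continuity from the first step completes the proof.
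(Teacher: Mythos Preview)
Your plan isolates the right difficulty (loss of a uniform lower bound on $|b|$ near $\partial\Omega$), but the mechanism you propose for reaching an arbitrary time $t$ from the short-time Proposition~\ref{P_local} does not work. You want to write $X(t)=X(t-s)\circ X(s)$ and invoke a ``chain rule for $\BV$/Sobolev maps together with the quantitative Jacobian bound $X(s)_\sharp\L^2\le L\L^2$''. There is no such chain rule in the generality you need: the composition of two $\BV$ (or two $W^{1,p}$, $p<\infty$) planar maps is in general neither $\BV$ nor Sobolev, and the compressibility bound is far too weak to repair this --- it does not make $X(s)$ bi-Lipschitz (which is what a change-of-variables argument would require), nor does it make the outer map Lipschitz. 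Concretely, what Proposition~\ref{P_Lip_NI} actually gives is
\[
|X(\bar t,z)-X(\bar t,z')|\le C'\big(|z-z'|+|g(H(z))-g(H(z'))|\big),
\]
so $X(\bar t)$ is Lipschitz only \emph{modulo} the $\BV$ function $g\circ H$; this extra term does not iterate under naive composition.

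The paper avoids composition altogether. Its key Lemma~\ref{L_main} compares the two trajectories $t\mapsto X(t,\bar z)$ and $s\mapsto X(s,z)$ across a finite chain of flow boxes covering the orbit of $\bar z$ up to time $t$, but instead of matching them at equal times it lets the second time parameter drift: at the $j$-th box one chooses $s_j$ so that $X(s_j,z)$ sits on the same transversal section as $X(t_j,\bar z)$, and proves by induction on $j$ that the accumulated lag satisfies
\[
|t_j-s_j|\le c_2\big(|g(H(\bar z))-g(H(z))|+|\bar z-z|\big),
\]
with $g(h)=|Db|(\{H\le h\}\cap\Omega_{k+1})$. At the end one recovers the equal-time estimate via
\[
|X(t,\bar z)-X(t,z)|\le |X(t,\bar z)-X(s,z)|+\|b\|_{L^\infty}|t-s|.
\]
This is precisely the ``translation in $\ell$ by an $h$-dependent amount'' picture you sketch in your last paragraph; that idea, carried out directly as a pointwise two-trajectory estimate rather than as a composition of maps, is the actual engine of the proof. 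The continuity of $X(t)$ on $\Omega$ then falls out for free from the continuity of $g$, which holds because $b$ is continuous and hence $|Db|$ gives no mass to any single regular level set $H^{-1}(h)\cap\Omega_{k+1}$ (a set of finite $\H^1$ measure).
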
  

The last result is an example that shows that the existence of $\delta>0$ as in Proposition \ref{P_local} cannot be dropped, as well as the restriction to $\Omega$ in Theorem \ref{T_global}. 
\begin{proposition}\label{P_example}
There exists a divergence-free vector field $b:\R^2\to \R^2$ such that $b\in W^{1,p}_\loc(\R^2;\R^2)$ for every $p\in [1,\infty)$, $b(z)\cdot e_1>0$ for $\L^2$-a.e. $z\in \R^2$  and for every time $ t>0$ 
the regular Lagrangian flow
\begin{equation*}
X(t) \notin \BV_\loc(\R^2;\R^2).
\end{equation*}
\end{proposition}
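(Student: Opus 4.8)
The natural plan is to prescribe $b$ through its Hamiltonian. Taking $b$ equal to $e_1$ outside a fixed ball (so that finite speed of propagation confines the analysis to a bounded region), we look for $b=\nabla^\perp H=(-\partial_2 H,\partial_1 H)$ with $H=-x_2+\Phi$ and $\Phi\in C^1_c(\R^2)$: then $\div b\equiv 0$ automatically, while the requirement $b(z)\cdot e_1>0$ for $\L^2$-a.e.\ $z$ is exactly $\partial_2\Phi\le 1$, with equality allowed only on the set $\{b_1=0\}$, which we shall arrange to be Lebesgue-null. Since $\partial_2 H<0$ a.e., each level set $\{H=h\}$ is the graph of a function $x_2=\gamma_h(x_1)$, the map $x_2\mapsto H(x_1^0,x_2)$ is, for every $x_1^0$, a strictly decreasing homeomorphism onto an interval, and, as recalled after \eqref{E_Hamiltonian} (using also that $b\in W^{1,1}_\loc$ with bounded divergence, so that the regular Lagrangian flow is unique and its a.e.\ trajectory stays on a level set), the flow slides along these graphs: writing $g(x_1,h):=b_1(x_1,\gamma_h(x_1))>0$ and letting $\psi^t_h$ be the flow of the scalar ODE $\dot x_1=g(\cdot,h)$, one has
\[
X\big(t,(x_1^0,x_2)\big)=\big(\psi^t_h(x_1^0),\,\gamma_h(\psi^t_h(x_1^0))\big),\qquad h=H(x_1^0,x_2).
\]
Because total variation is invariant under the monotone reparametrisation $x_2\mapsto H(x_1^0,x_2)$, and because a $\BV(\R^2)$ function restricts to a $\BV$ function on a.e.\ vertical line, it suffices to arrange that, for every $t>0$ and every $x_1^0$ in a suitable interval, the map $h\mapsto\psi^t_h(x_1^0)$ has infinite pointwise variation on a bounded $h$-interval; then $X(t)\notin\BV_\loc(\R^2;\R^2)$.

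To produce this infinite variation one builds into $\Phi$, near $x_1=0$, a countable family of degeneracies of $H$ (``traps'') whose critical values accumulate, so that the scalar ODEs $\dot x_1=g(\cdot,h)$ exhibit \emph{bottlenecks} — zones where $g$ is very small — whose position and depth vary with $h$ in a resonant way. The role of the accumulation near $x_1=0$ is that, for every prescribed $t>0$, infinitely many bottlenecks are reached by the trajectory before time $t$; the role of the vanishing of $b_1$ at a trap is that the traversal time of the corresponding bottleneck tends to $+\infty$ as the level curve approaches the trap, so that for every $t$ there is a set of levels of positive measure on which the trajectory is ``pinned'' near that bottleneck while for the complementary (positive‑measure) set of levels it runs essentially at unit speed. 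The resulting function $h\mapsto\psi^t_h(0)$ then performs infinitely many oscillations of amplitude bounded below — by a constant multiple of $t$, and by a fixed positive constant if one also installs a single non‑degenerate trap at a fixed location — whence infinite variation for every $t>0$; the same conclusion holds for $\psi^t_h(x_1^0)$ with $x_1^0$ in a neighbourhood of $0$.

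The heart of the matter — and the step I expect to be the main obstacle — is to carry this out while keeping $b\in W^{1,p}_\loc(\R^2;\R^2)$ for \emph{every} finite $p$ (equivalently $b\in\bigcap_{\alpha<1}C^{0,\alpha}_\loc$), since $b$ must nevertheless fail to be locally Lipschitz, or its flow would be the classical Lipschitz one. This constraint rules out the naïve realisations: a bottleneck in which $b_1$ falls by an order‑one amount across a region of vanishing diameter $\delta$ contributes $\gtrsim\delta^{2-p}$ to $\|\nabla b\|^p_{L^p}$, which is not summable over infinitely many traps when $p>2$; and, more generally, a construction in which the bottlenecks contribute \emph{independently} to the variation of the flow seems obstructed, because the $W^{1,1}$ bound forces $b_1$ to have finite total variation on a.e.\ line and thereby limits how much oscillation a single bottleneck can transfer to $h\mapsto\psi^t_h(0)$. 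One is therefore pushed into a \emph{self‑similar, multi‑scale} construction — with the traps organised in nested scales $r_n\downarrow 0$, vanishing orders $N_n\uparrow\infty$ and ramp amplitudes tending to $0$ — in which the oscillations of the flow accumulate \emph{cooperatively} across scales rather than additively, the delicate point being to balance the trapping estimates (how small $g$ must become, and over what length, to delay a trajectory past any fixed $t$) against the summability, for every $p$, of $\sum_n\|\nabla b\|^p_{L^p(\text{scale }n)}$, all subject to the geometric constraints $\partial_2\Phi\le 1$ and the level sets remaining Lipschitz graphs through the degenerate points. Once this balancing is achieved, the remaining verifications — that $H\in C^1_c$, that $b_1>0$ $\L^2$-a.e.\ since $\{b_1=0\}$ is Lebesgue‑null, and that the sliding description indeed computes the regular Lagrangian flow — are routine.
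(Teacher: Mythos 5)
Your reduction is sound as far as it goes: writing $b=\nabla^\perp H$ with $H=-x_2+\Phi$, noting that $b\cdot e_1>0$ forces the level sets to be Lipschitz graphs over $x_1$, and reducing the failure of $\BV$ regularity to the unbounded variation of the one-dimensional map $h\mapsto\psi^t_h(x_1^0)$ is exactly the right frame, and you have correctly located the real difficulty (unbounded variation for \emph{every} $t>0$ compatibly with $b\in W^{1,p}_\loc$ for every finite $p$, which forces a nested multi-scale design). But the proposal stops precisely there: the object that constitutes the proof --- the explicit self-similar Hamiltonian, the choice of scales, and the two competing estimates (divergence of the variation of the transit time versus summability, for each $p$, of the $p$-th power of the Sobolev norm over the $n$-th scale) --- is not produced, and you say so yourself. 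As it stands this is a statement of strategy plus an honest list of obstacles, not a proof.

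Moreover, the mechanism you propose for generating the oscillations differs from the one that actually works, and I doubt it can be carried out as described. You want genuine degeneracies of $H$ (\emph{traps} where $b$ vanishes) so that traversal times diverge and, for each $t$, a positive-measure set of levels is pinned while the rest escape, yielding oscillations of amplitude bounded below. Two problems: a diverging traversal time near a critical level produces a monotone blow-up of the transit time as $h$ approaches the critical value, i.e.\ a single step per trap, whereas infinite variation needs the transit time to go \emph{up and down} in $h$, which requires a branching of the level-set geometry rather than a pinning; and oscillations of amplitude bounded below by a fixed constant, repeated infinitely often, are far more than is needed and sit badly with the $W^{1,p}$ budget you yourself computed. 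The paper's construction avoids both issues: $b$ never vanishes at any finite scale (on the $n$-th generation sets $b\cdot e_1=\partial_2 f$ equals $v_n$ or $v_n'>0$, degenerating only on the null set $\bigcap_n C_n$), every transit time is finite with $\sup T<\infty$, and the infinite variation comes from a fast/slow \emph{branching}: in each of the $2^{n-1}$ components of $C_n$ the level sets split into a slow channel through $D_n$ and a shortcut through $E_n$, with transit-time gap of order $n^{-4}$; adjacent levels alternate between the two, so the transit-time function oscillates $2^{n-1}$ times with amplitude of order $n^{-4}$, and $2^{n-1}n^{-4}\to\infty$. Sobolev regularity for all $p$ is then recovered by mollifying each generation's increment $h_l$ at that generation's scale $r_l$, which leaves the flow on the relevant sets unchanged. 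So: right reduction and a correct diagnosis of the difficulty, but the decisive construction is absent and the proposed trapping mechanism would have to be replaced by a branching one.
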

The construction of the Hamiltonian $H$ associated to $b$ in Proposition \ref{P_example} is a suitable modification of the construction in 
\cite{ABC2} of a Lipschitz Hamiltonian for which the uniqueness of the corresponding regular Lagrangian flow fails.
As opposed to the already mentioned result in \cite{Jabin_example}, the proposed construction is deterministic and disproves the Sobolev 
regularity of the regular Lagrangian flow also for autonomous vector fields.
 
We finally mention that the question about the Sobolev or BV regularity of the regular Lagrangian flow associated to autonomous planar vector 
fields was posed to the author by M. Colombo and R. Tione, motivated by the study of the commutativity property of the flows 
associated to vector fields with vanishing Lie bracket \cite{CT_Lie}.

\section{Local estimate for nearly incompressible vector fields}
In this section we prove Proposition \ref{P_local}.
We begin with two preliminary lemmas about autonomous nearly incompressible vector fields in $\R^d$.
In the first lemma we show that in the case of autonomous nearly incompressible vector fields we can assume without loss of generality that the existence 
time $T$ of $\rho$ in Definition \ref{D_NI} is arbitrarily large.
\begin{lemma}\label{L_all_T}
Let $b:\R^d \to \R^d$ be an autonomous nearly incompressible vector field and let  $\rho:[0,T]\times \R^d\to \R$, $C>0$ be as in Definition \ref{D_NI}.
Then there exists $\tilde \rho \in  C^0([0,+\infty); L^\infty_w(\R^d))$ solving \eqref{E_CE} such that for $\L^{d+1}$-a.e. $(t,x) \in \R^+\times \R^d$ it holds
\begin{equation}\label{E_allT}
\tilde C^{-1} \le \tilde \rho(t,x) \le \tilde C, \qquad \mbox{with} \qquad \tilde C = \tilde C(t):= C^{\frac{2t}{T}+1}.
\end{equation}
\end{lemma}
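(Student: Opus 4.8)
The plan is to extend $\rho$ to all of $[0,+\infty)$ by iteratively \emph{restarting} the continuity equation \eqref{E_CE} at the times $kT$, $k\in\N$, producing at each step a bounded nonnegative solution on an interval of length $T$ whose $L^\infty$-bounds deteriorate by a fixed factor $C^2$. Throughout I assume, without loss of generality, that $C\ge 1$.

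The first step is to apply the superposition principle to the nonnegative measure-valued solution $s\mapsto\rho(s,\cdot)\L^d$ of \eqref{E_CE} on $[0,T]$ (the required integrability holds since $b$ is bounded, so the relevant measure, although of infinite mass, is $\sigma$-finite and supported trajectories stay in compact sets): this yields a $\sigma$-finite measure $\eta$ on $C([0,T];\R^d)$, concentrated on the integral curves of $b$, i.e. on the $\gamma$ with $\dot\gamma(s)=b(\gamma(s))$ for a.e. $s$, such that $(e_s)_\sharp\eta=\rho(s,\cdot)\L^d$ for every $s\in[0,T]$, where $e_s(\gamma):=\gamma(s)$.

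I then define $\tilde\rho$ inductively. Set $\tilde\rho:=\rho$ on $[0,T]$ and note $g_1:=\tilde\rho(T,\cdot)=\rho(T,\cdot)\in[C^{-1},C]$ a.e. Assume that for some $k\ge 1$ the map $\tilde\rho$ has been defined on $[0,kT]$, is weak-$*$ continuous there, solves \eqref{E_CE}, and $g_k:=\tilde\rho(kT,\cdot)$ satisfies $C^{-(2k-1)}\le g_k\le C^{2k-1}$ a.e. Since $\rho(0,\cdot)\in[C^{-1},C]$, the Borel function $w_k(\gamma):=g_k(\gamma(0))/\rho(0,\gamma(0))$ takes values in $[C^{-2k},C^{2k}]$, and I set, for $s\in[0,T]$,
\begin{equation*}
\tilde\rho(kT+s,\cdot)\,\L^d:=(e_s)_\sharp\big(w_k\,\eta\big).
\end{equation*}
Three points then have to be checked. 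First, $w_k\eta$ is still concentrated on integral curves of $b$, so $s\mapsto(e_s)_\sharp(w_k\eta)$ is weak-$*$ continuous and solves $\partial_s\mu+\div(b\mu)=0$, i.e. $\tilde\rho$ solves \eqref{E_CE} on $(kT,(k+1)T)$; this is the elementary direction of the superposition correspondence. Second, the bounds: from $w_k\le C^{2k}$ we get $(e_s)_\sharp(w_k\eta)\le C^{2k}(e_s)_\sharp\eta=C^{2k}\rho(s,\cdot)\L^d\le C^{2k+1}\L^d$, and symmetrically $(e_s)_\sharp(w_k\eta)\ge C^{-(2k+1)}\L^d$; in particular the pushforward is absolutely continuous, $C^{-(2k+1)}\le\tilde\rho(kT+s,\cdot)\le C^{2k+1}$ a.e., and $g_{k+1}:=\tilde\rho((k+1)T,\cdot)\in[C^{-(2k+1)},C^{2k+1}]=[C^{-(2(k+1)-1)},C^{2(k+1)-1}]$, which closes the induction. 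Third, at the junction the definition is consistent, since $(e_0)_\sharp(w_k\eta)=g_k\,\L^d=\tilde\rho(kT,\cdot)\,\L^d$, so weak-$*$ continuity is preserved across $t=kT$.

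This produces $\tilde\rho\in C^0([0,+\infty);L^\infty_w(\R^d))$ solving \eqref{E_CE}, with $C^{-(2k+1)}\le\tilde\rho(t,\cdot)\le C^{2k+1}$ a.e. for $t\in[kT,(k+1)T]$, $k\ge 0$ (the case $k=0$ being $\tilde\rho=\rho\in[C^{-1},C]$); since $2t/T+1\ge 2k+1$ on that interval and $C\ge 1$, this is exactly \eqref{E_allT}. The substance of the argument is the constant bookkeeping: the key is that the first restart is ``free'', because $\rho(T,\cdot)$ still lies in $[C^{-1},C]$, so that the exponent grows by exactly $2$ per interval of length $T$ and not faster. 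The only genuinely analytic input that needs care is the applicability of the superposition principle to the infinite-mass density $\rho\L^{d+1}$; this is resolved by the boundedness of $b$ (one localizes in the starting point of the curves and uses finite speed of propagation), and as an alternative one could replace $\eta$ altogether by pushforward along a regular Lagrangian flow of $b$ on $[0,T]$.
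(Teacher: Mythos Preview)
Your proof is correct and follows essentially the same approach as the paper: apply Ambrosio's superposition principle to $\rho$ on $[0,T]$, then restart on successive intervals of length $T$ by reweighting the measure $\eta$ so that the new initial density matches the terminal density of the previous step, picking up a factor $C^2$ in the bounds at each restart. The only cosmetic differences are that the paper phrases the reweighting through the disintegration $\eta=\int\rho(0,x)\eta_x\,dx$ and then sets $\eta'=\int\rho(T,x)\eta_x\,dx$ (which is exactly your $w_1\eta$), and that the paper writes out one step and then says ``iterating'' while you carry out an explicit induction; you are also slightly more explicit than the paper about the infinite-mass issue in invoking superposition.
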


\begin{proof}
By Ambrosio's superposition principle (see \cite{AC_bologna}), there exists a Radon measure $\eta$ on $\Gamma_T:=C([0,T];\R^d)$ such that for every $t \in [0,T]$ it holds
\begin{equation*}
(e_t)_\sharp \eta = \rho (t,\cdot)\L^d,
\end{equation*}
where $e_t(\gamma):=\gamma(t)$ denotes the evaluation map at time $t$ defined on $\Gamma_T$.
We denote by $\{\eta_x\}_{x\in \R^d} \subset \mathcal P(\Gamma_T)$ its disintegration with respect to the evaluation map at time 0, so that
\begin{equation*}
\eta= \int_{\R^d}\rho(0,x)\eta_x dx
\end{equation*}
and we define
\begin{equation*}
\eta'= \int_{\R^d}\rho(T,x)\eta_xdx.
\end{equation*}
Since $\rho \in [C^{-1},C]$, it holds $C^{-2}\rho(0,x)\le \rho(T,x) \le C^2\rho(0,x)$ for $\L^d$-a.e. $x \in \R^d$.
In particular $C^{-2}(e_t)_\sharp \eta \le (e_t)_\sharp \eta' \le C^2 (e_t)_\sharp \eta$ for every $t\in [0,T]$, therefore
\begin{equation}\label{E_est_rho'}
(e_t)_\sharp \eta' = \rho'(t,\cdot) \L^d, \qquad \mbox{with}\qquad C^{-2}\rho(t,\cdot) \le \rho'(t,\cdot) \le C^2\rho(t,\cdot).
\end{equation}
Let $\tilde \rho : (0,2T)\times \R^d \to \R$ be defined by
\begin{equation*}
\tilde \rho (t,z) = \begin{cases}
\rho(t,z) & \mbox{if }t \in (0,T], \\
\rho'(t-T,z) & \mbox{if }t \in (T,2T).
\end{cases}
\end{equation*}
Since $\tilde \rho$ solves \eqref{E_CE} in $\D'((0,T)\times \R^d)$ and  $\D'((T,2T)\times \R^d)$ separately and
$t\mapsto \tilde \rho(t)$ is continuous with respect to the weak* topology in $L^\infty(\R)$, then $\tilde \rho$ solves \eqref{E_CE} in $\D'((0,2T)\times \R^d)$.
By \eqref{E_est_rho'} it follows that for every $t \in [T,2T]$ it holds
\begin{equation}\label{E_-T}
C^{-2}\tilde \rho (t - T, \cdot)\L^d \le \tilde \rho(t,\cdot)\L^d \le C^2 \tilde \rho (t-T,\cdot)\L^d.
\end{equation}
Iterating the construction above we obtain a solution $\tilde \rho:\R^+\times \R^d \to \R$ of \eqref{E_CE} such that \eqref{E_-T} holds for every $t\ge T$. 
In particular for every $N\in \N$ and for every $t \in [NT, (N+1)T]$ it holds
\begin{equation*}
C^{-2N}\rho (t-NT,\cdot) \L^d \le \tilde \rho(t,\cdot)\L^d \le C^{2N}\rho(t-NT,\cdot)\L^d,
\end{equation*}
which immediately implies \eqref{E_allT} since $\rho \in [C^{-1},C]$.
\end{proof}

The vector fields for which the function $\rho$ in Definition \ref{D_NI} can be chosen independent of $t$ are called \emph{steady nearly incompressible}. 
Although not every nearly incompressible autonomous vector field is steady nearly incompressible, we can reduce to the latter case under the assumptions of Proposition \ref{P_local}.
The proof of the following lemma is an adaptation of the argument in \cite{BBG_NI2d}.

\begin{lemma}\label{L_steady}
Let $b:\R^d \to \R^d$ be an autonomous, bounded, nearly incompressible vector field and let $\Omega\subset \R^d$ be an open ball of radius $R>0$.
Assume that there exist $\delta>0$ and $e \in \S^{d-1}$ for which for $\L^d$-a.e. $x\in \Omega$ it holds $b(x)\cdot e \ge \delta$.
Then $b\llcorner \Omega$ is steady nearly incompressible, namely there exists $r:\Omega \to \R$ and $\tilde C>0$ such that 
\begin{equation*}
\tilde C^{-1}\le r \le \tilde C \qquad \mbox{and} \qquad \div (rb)=0 \quad \mbox{in } \Omega.
\end{equation*}
\end{lemma}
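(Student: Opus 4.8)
The plan is to reduce, by a rotation of coordinates, to the case $e=e_1$, and then to use that under the transversality assumption every integral curve of $b$ crosses $\Omega$ in a uniformly bounded time. This ``finite memory'' of the dynamics in $\Omega$ makes it possible to produce, out of the given density $\rho$ (a priori defined only on a finite time interval), a density on all of $(0,+\infty)\times\Omega$ which is bounded above and below by a single constant; averaging it in time then yields the steady density $r$.

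First I would note that rotating coordinates preserves near incompressibility, the boundedness and $\BV$-regularity of $b$, the ball $\Omega$, and the continuity equation, so that we may assume $e=e_1$, i.e.\ $b_1:=b\cdot e_1\ge\delta$ a.e.\ in $\Omega=B_R(x_0)$. The elementary observation is that if $\gamma$ is an integral curve of $b$ with $\gamma(s)\in\Omega$ for $s$ in an interval $I$, then $\tfrac{d}{ds}\gamma_1(s)=b_1(\gamma(s))\ge\delta$ while $\gamma_1$ ranges in an interval of length $\le 2R$, so $|I|\le 2R/\delta=:\tau_0$.

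Next, using Lemma~\ref{L_all_T} (if the time $T$ of Definition~\ref{D_NI} is too small, I first enlarge it to, say, $T=4\tau_0$ at the price of replacing $C$ by a fixed constant) I would construct $\rho^{\mathrm{ext}}\in L^\infty\big((0,+\infty)\times\Omega\big)$, solving $\partial_t\rho^{\mathrm{ext}}+\div_x(\rho^{\mathrm{ext}}b)=0$ in $\D'\big((0,+\infty)\times\Omega\big)$ and satisfying $C_2^{-1}\le\rho^{\mathrm{ext}}\le C_2$ with $C_2=C_2(C,R,\delta,\|b\|_{L^\infty})$. The idea is to represent $\rho$ on $(0,T)\times\Omega$, via Ambrosio's superposition principle, as a superposition of integral curves of $b$ which are maximal in $\Omega$ (i.e.\ exiting $\Omega$ at both ends, apart from endpoints lying on $\{t=0\}$ or $\{t=T\}$); by the observation above, the curves present in $\Omega$ at time $0$ have all exited by time $\tau_0$, while the others enter $\Omega$ through its inflow boundary. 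One then ``replays'' the inflow occurred on the window $(\tau_0,T-\tau_0)$ periodically with period $P:=T-2\tau_0>\tau_0$, placing each replayed curve on the same trajectory traversed at a shifted time; since $b$ is autonomous these translates are again integral curves of $b$, so $\rho^{\mathrm{ext}}$ is a superposition of curves maximal in $\Omega$, hence a distributional solution of the homogeneous continuity equation on the open set $(0,+\infty)\times\Omega$ (its only discontinuities lie on hypersurfaces swept by characteristics, so no source is created). The two‑sided bound on $\rho^{\mathrm{ext}}$ is inherited from that of $\rho$ because each replayed curve carries the same mass as its model and, for $t>2\tau_0$, every point of $\Omega$ lies on a replayed curve that entered $\Omega$ within the previous $\tau_0$ units of time. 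This is the step that adapts the argument of \cite{BBG_NI2d}, and it is the main obstacle: one has to use the superposition principle on a domain with boundary, correctly identify and periodize the inflow, and—above all—check that the $L^\infty$ bounds are not lost in the process, which is exactly where the finiteness of the crossing time (i.e.\ the hypothesis $b\cdot e\ge\delta$) is essential.

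Finally I would let $r\in L^\infty(\Omega)$ be a weak-$*$ limit of $\frac1{T_n}\int_0^{T_n}\rho^{\mathrm{ext}}(t,\cdot)\,dt$ along some $T_n\to+\infty$, so that $C_2^{-1}\le r\le C_2$. Testing $\partial_t\rho^{\mathrm{ext}}+\div_x(\rho^{\mathrm{ext}}b)=0$ against $\psi\in C^\infty_c(\Omega)$ and integrating in $t$, the function $m(t):=\int_\Omega\rho^{\mathrm{ext}}(t,x)\psi(x)\,dx$ is Lipschitz and bounded by $C_2\|\psi\|_{L^1}$, with $m'(t)=\int_\Omega\rho^{\mathrm{ext}}(t,\cdot)\,b\cdot\nabla\psi\,dx$; hence
\[
\int_\Omega r\,b\cdot\nabla\psi\,dx=\lim_{n\to\infty}\frac1{T_n}\int_0^{T_n}m'(t)\,dt=\lim_{n\to\infty}\frac{m(T_n)-m(0)}{T_n}=0,
\]
that is $\div(rb)=0$ in $\D'(\Omega)$. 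This gives the statement with $\tilde C=C_2$.
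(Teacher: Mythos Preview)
Your overall strategy---finite crossing time from $b\cdot e\ge\delta$, enlarging $T$ via Lemma~\ref{L_all_T}, superposition, and a time average---matches the paper's. The difference is in how you pass from the time-dependent $\rho$ to the steady $r$. You extend $\rho$ to $(0,+\infty)\times\Omega$ with uniform bounds by periodizing the inflow through $\partial\Omega$ and then take a long-time weak-$*$ average; this can in principle be made to work, but the periodization is delicate (in particular, the lower bound on $\rho^{\mathrm{ext}}$ when the relevant inflow window of length $\tau_0$ straddles the seam between two replayed periods is not justified by your sketch). The paper goes the opposite way and \emph{truncates}: from each curve $\gamma$ in the superposition it removes the connected components of $\{t:\gamma(t)\in\Omega\}$ containing $0$ and $T$, obtaining a density $\tilde\rho\le\rho$ that still solves the continuity equation in $(0,T)\times\Omega$ (the cut creates a source supported on $\partial\Omega$), vanishes at $t=0$ and $t=T$ on $\Omega$, and---since each removed piece has length $\le 2R/\delta$---agrees with $\rho$ on $[T/3,2T/3]\times\Omega$ once $T\ge 6R/\delta$. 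A single finite average $r=\tfrac{1}{T}\int_0^T\tilde\rho(t,\cdot)\,dt$ then already satisfies $\div(rb)=0$ in $\Omega$ (the $t$-boundary terms vanish) with $\tfrac{1}{3C}\le r\le C$ immediately. This buys a cleaner argument with no periodization and no weak-$*$ limit; your route, if the bounds on $\rho^{\mathrm{ext}}$ are carefully verified, would instead give a density that is genuinely defined for all $t>0$, which is not needed here.
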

\begin{proof}
Let $\rho:(0,T)\times \R^d \to \R$ and $C>0$ be as in Definition \ref{D_NI}. 
Let $\eta \in \M(\Gamma_T)$ the Radon measure provided by Ambrosio's superposition principle.
In particular if we denote by 
\begin{equation*}
\begin{split}
\tilde e: \Gamma_T\times [0,T] & \to [0,T]\times \R^d \\
(\gamma,t) & \mapsto (t,\gamma(t))
\end{split}
\end{equation*}
it holds
\begin{equation*}
\tilde e_\sharp (\eta \times \L^1) = \rho \left(\L^1 \times \L^d\right).
\end{equation*}
For every $\gamma\in \Gamma$ we set
\begin{equation*}
I_\gamma := \{t \in [0,T]: \gamma(t)\in \Omega\}.
\end{equation*} 
Let $I_{\gamma,0}=[0,t^-_\gamma)$ be the (possibly empty) connected component of $I_\gamma$ containing $0$ and similarly let $I_{\gamma,T}=(t^+_\gamma,T]$ be the connected component of $I_\gamma$ containing $T$.
We denote by
\begin{equation*}
\tilde I_\gamma = [0,T] \setminus (I_{\gamma,0} \cup I_{\gamma,T})
\end{equation*}
and 
\begin{equation*}
\Gamma^-:= \{ \gamma \in \Gamma_T : \gamma(0)\in \Omega)\}, \qquad \Gamma^+:= \{ \gamma \in \Gamma_T : \gamma(T)\in \Omega)\}.
\end{equation*}
Moreover we consider
\begin{equation*}
\tilde \eta = \eta \otimes (\L^1\llcorner \tilde I_\gamma).
\end{equation*}
By definition $\tilde \eta \le \eta \times \L^1$ therefore there exists $\tilde \rho \in L^\infty((0,T)\times \R^d)$ such that  
\begin{equation*}
\tilde e_\sharp \tilde \eta = \tilde \rho (\L^1 \times \L^d).
\end{equation*} 
with $0\le \tilde \rho \le \rho$. 
The following standard computation shows that the density $\tilde \rho$ satisfies the continuity equation
\begin{equation*}
\partial_t \tilde \rho + \div_x(\tilde \rho b) = \mu \qquad \mbox{in }\D'((0,T)\times\R^d),
\end{equation*}
where
\begin{equation*}
\mu = \int_{\Gamma^-}\delta_{t^-_\gamma, \gamma(t^-_\gamma)} d\eta(\gamma) - \int_{\Gamma^+}\delta_{t^+_\gamma, \gamma(t^+_\gamma)} d \eta (\gamma).
\end{equation*}
Given $\varphi \in C^\infty_c((0,T)\times \R^d)$ it holds
\begin{equation*}
\begin{split}
\langle \partial_t \tilde \rho + \div_x(\tilde \rho b), \varphi \rangle = &~ - \int \tilde \rho(t,x) (\varphi_t(t,x) + b(t,x)\cdot \nabla_xb(t,x))dxdt \\
=&~  -  \int (\varphi_t(t,\gamma(t)) + b(t,\gamma(t))\cdot \nabla_xb(t,\gamma(t))) \chi_{\tilde I_\gamma}(t)dt d\eta (\gamma) \\
=&~  -  \int (\varphi_t(t,\gamma(t)) + \dot\gamma(t)\cdot \nabla_xb(t,\gamma(t))) \chi_{\tilde I_\gamma}(t)dt d\eta (\gamma) \\
= &~ - \int \frac{d}{dt}(\varphi(t,\gamma(t))\chi_{\tilde I_\gamma}(t)dt d\eta (\gamma) \\
= &~ \int \left( \varphi(t^-_\gamma,\gamma(t^-_\gamma))- \varphi(t^+_\gamma,\gamma(t^+_\gamma))\right) d\eta(\gamma) \\
= &~ \int \varphi d \mu,
\end{split}
\end{equation*}
where in the last equality we used that $\varphi (0,\cdot)=\varphi(T,\cdot)\equiv 0$.
In particular $\mu$ is concentrated on $\partial \Omega$ so that 
\begin{equation}\label{E_CE_Omega}
\partial_t \tilde \rho + \div_x(\tilde \rho b) = 0 \qquad \mbox{in }\D'((0,T)\times\Omega).
\end{equation}

Since $b\cdot e >\delta$ in $\Omega$, every connected component of $I_\gamma$ has length at most $2R/\delta$.
Up to change the constant $C>0$, by Lemma \ref{L_all_T} we can assume that
\begin{equation*}
T\ge \frac{6R}{\delta},
\end{equation*}
therefore it follows that $\tilde \rho(t,z) = \rho(t,z)$ for $\L^1\times \L^d$-a.e. $(t,z) \in [T/3,2T/3] \times \Omega$, in particular
\begin{equation}\label{E_lower}
\tilde \rho (t,z) \ge C^{-1} \qquad \mbox{in }[T/3, 2T/3] \times \Omega.
\end{equation}
Being $\tilde \rho(0,x)=\tilde \rho (T,x)=0$ for $\L^d$-a.e. $x \in \Omega$, by integrating \eqref{E_CE_Omega} with respect to $t$, it follows that
\begin{equation*}
r(x) := \frac{1}{T}\int_0^T \tilde \rho(t,x)dx
\end{equation*}
satisfies $\div(rb)=0$ in $\D'(\Omega)$. 
From \eqref{E_lower} and the definition of $r$, it follows that for $\L^d$-a.e. $x \in \R^2$ it holds
\begin{equation*}
\frac{1}{3 C} \le r(x) \le  \|\rho\|_{L^\infty} \le  C
\end{equation*} 
and this proves the claim with $\tilde C = 3C$.
\end{proof}

In the following of this paper we restrict to the case $d=2$ and in the remaining part of this section  we will always assume that the hypothesis in Proposition \ref{P_local} are satisfied. 
In particular there exists a Lipschitz Hamiltonian $H:\Omega \to \R$ such that
\begin{equation}\label{E_Hamilton}
rb=\nabla^\perp H \qquad \L^2\mbox{-a.e. in }\Omega.
\end{equation}
The generic point in $\R^2$ will be denoted by $z=(x,y)$ and we assume without loss of generality that $e=e_1$. Being $b\cdot e_1>\delta$ and $r \in [\tilde C^{-1},\tilde C]$ 
for every $h \in H(\Omega)$ there exist an open set $O_h \subset \R$ and a Lipschitz function $f_h:O_h \to \R$ such that 
\begin{equation*}
\{z\in \Omega: H(z)=h\} = \{ (x,y) : x \in O_h, y = f_h(x)\}.
\end{equation*}
We will also denote by
\begin{equation}\label{E_tildef}
\tilde f_h(x) := (x,f_h(x))
\end{equation}
for every $x\in O_h$.
The Lipschitz constant $L$ of $f_h$ can be estimated by
\begin{equation}\label{E_L}
L \le \frac{\|rb\|_{L^\infty}}{\inf_{\Omega} (rb\cdot e_1)}\le \frac{C^2\|b\|_{L^\infty}}{\delta}.
\end{equation}

In the following we consider vector fields $b$ with bounded variation. 
As already mentioned in the introduction, the uniqueness problem for the regular Lagrangian flow associated to $rb$ was solved in \cite{BBG_NI2d}, where in particular it is proven that the local Hamiltonian is preserved by the flow, namely
\begin{equation*}
H(t,X(t,z))=H(z) \qquad \forall z \in \Omega \mbox{ and  } t< \dist(\partial\Omega, z).
\end{equation*}
We will consider the representative of the regular Lagrangian flow defined as follows:
for every $z = (x,y) \in \Omega$ and $t< \dist(\partial\Omega, z)$, we set $X(t,z)=(X_1(t,z),f_{H(z)}(X_1(t,z)))$ where $X_1(t,z)$ is uniquely determined by
\begin{equation}\label{E_precise}
\int_{x}^{X_1(t,z)}\frac{1}{\tilde b_1(s, f_{H(z)}(s))}ds = t,
\end{equation}
and where $\tilde b$ denotes the precise representative of $b$, defined at $\H^1$-a.e. $z \in \R^2$ (see for example \cite{AFP_book}).
In particular, if we denote by
\begin{equation*}
R:=\{h\in H(\Omega): |Db|(H^{-1}(h)\cap \Omega)=0\},
\end{equation*}
it holds that $\H^1$-a.e. $z \in H^{-1}(h)\cap \Omega$ is a Lebesgue point of $b$ with value $\tilde b(z)$. In the following we will still denote by $b$ the precise representative $\tilde b$.

\begin{proposition}\label{P_Lip_NI}
Let  $b \in \BV(\R^2;\R^2)$ be a bounded autonomous nearly incompressible vector field. 
Let $\Omega\subset \R^2$ be an open ball of radius $R>0$ such that there exist $\delta>0$ and $e\in \S^1$ for which $b\cdot e >\delta$ a.e. in $\Omega$.
Then there exists $g \in \BV_\loc(\R)$ and a constant $C'=C'(R,\|b\|_{L^\infty},\delta, C)>0$ such that for every $z,z' \in \Omega$ with $H(z),H(z')\in R$ and every $\bar t >0$ for which
\begin{equation*}
\dist (z,\partial \Omega), \dist (z',\partial \Omega) > \|b\|_{L^\infty} \bar t,
\end{equation*}
it holds
\begin{equation}\label{E_Lip_loc}
|X(\bar t,z)-X(\bar t,z')|\le C' \big(|z-z'|  + |g(H(z))-g(H(z'))|\big),
\end{equation}
where $C$ is the compressibility constant in Definition \ref{D_NI} and $H$ is the Hamiltonian introduced in \eqref{E_Hamilton}.

If moreover 
 $b\in W^{1,p}(\R^2;\R^2)$ for some $p\in [1,\infty)$, then \eqref{E_Lip_loc} holds with $g \in W^{1,p}_\loc(\R)$.
\end{proposition}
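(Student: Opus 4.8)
The plan is to work on a single level set at a time and reduce the two-dimensional flow to a one-dimensional problem governed by the explicit formula \eqref{E_precise}. Fix $z=(x,y)$ and $z'=(x',y')$ in $\Omega$ with $h:=H(z)$ and $h':=H(z')$ both in $R$, and fix an admissible time $\bar t$. Since $H$ is preserved by the flow, $X(\bar t,z)=\tilde f_h(X_1(\bar t,z))$ and $X(\bar t,z')=\tilde f_{h'}(X_1(\bar t,z'))$, so by the uniform Lipschitz bound \eqref{E_L} on $f_h$ it suffices to estimate $|X_1(\bar t,z)-X_1(\bar t,z')|$ and $|f_h(X_1(\bar t,z))-f_{h'}(X_1(\bar t,z'))|$. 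Both reduce, via the triangle inequality and again \eqref{E_L}, to controlling $|X_1(\bar t,z)-X_1(\bar t,z')|$ together with a term measuring how the graphs $\tilde f_h$ and $\tilde f_{h'}$ differ, i.e.\ something like $\sup_x |f_h(x)-f_{h'}(x)|$; the latter should be comparable to $|h-h'|$ divided by the lower bound on $|rb\cdot e_2|$-type quantities, but more robustly one absorbs it into the function $g$ to be constructed.

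\medskip
The core of the argument is the one-dimensional estimate. Define, for $h\in R$, the ``arrival map'' implicitly through \eqref{E_precise}; differentiating the defining identity shows that $s\mapsto$ (flow on the level set $h$) has, as its generator, the scalar field $s\mapsto \tilde b_1(s,f_h(s))>\delta$. Thus on each level set the flow is the one-dimensional flow of a BV (resp.\ $W^{1,p}$) vector field bounded below by $\delta$, and the classical one-dimensional Lusin--Lipschitz / change-of-variables estimate gives, for $z,z'$ on the \emph{same} level set $h$,
\begin{equation*}
|X_1(\bar t,z)-X_1(\bar t,z')|\le C''\Big(|x-x'| + \big|G_h(x)-G_h(x')\big|\Big),
\end{equation*}
where $G_h$ is a primitive of an $L^1$ (resp.\ $L^p$) function on $O_h$ whose norm is controlled by $|Db|(H^{-1}(h)\cap\Omega)$ (resp.\ by $\|\nabla b\|_{L^p}$ on that fiber) divided by powers of $\delta$; here one uses $\bar t<\dist/\|b\|_{L^\infty}$ to keep trajectories inside $\Omega$, and the compressibility constant $C$ (via $r\in[\tilde C^{-1},\tilde C]$ from Lemma \ref{L_steady}) to pass between $b$ and $rb=\nabla^\perp H$. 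The number $g(h)$ is then built by ``integrating'' this fiberwise data across $h$: concretely one sets $g(h):= \int_{O_h} |\partial_s f_h(s)|\,|\partial_s \tilde b_1(s,f_h(s))|\,ds$ or a similar bookkeeping quantity, arranged so that $g\in\BV_\loc(\R)$ (resp.\ $W^{1,p}_\loc(\R)$) with seminorm controlled by $|Db|(\Omega)$ (resp.\ $\|b\|_{W^{1,p}}$) — the coarea formula applied to $H$ on $\Omega$ is what guarantees integrability in $h$ and hence the claimed regularity of $g$.

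\medskip
To go from two points on the same level set to two points on different level sets $h\ne h'$, I would interpolate: compare $X(\bar t,z)$ with the time-$\bar t$ image of an auxiliary point $z''$ lying on the level set $h$ but ``above'' $z'$ (same first coordinate as $z'$, i.e.\ $z''=\tilde f_h(x')$), then compare $X(\bar t,z'')$ with $X(\bar t,z')$. The first comparison is the same-level-set estimate above; the second compares the flows on two nearby fibers, and is estimated by the variation of the fiber data between $h$ and $h'$, which is exactly $|g(h)-g(h')|$ by construction of $g$, plus a harmless $|h-h'|\lesssim |z-z'|$ coming from the Lipschitz dependence of $H$. Collecting terms and using $|h-h'|=|H(z)-H(z')|\le \Lip(H)|z-z'|\lesssim |z-z'|$ (with $\Lip(H)=\|rb\|_{L^\infty}\le \tilde C\|b\|_{L^\infty}$) yields \eqref{E_Lip_loc} with $C'$ depending only on $R,\|b\|_{L^\infty},\delta,C$.

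\medskip
The main obstacle is the transverse (between-level-sets) comparison and the simultaneous verification that the single function $g$ can be chosen with the right regularity: one must quantify how the one-dimensional flows on fibers $H^{-1}(h)$ and $H^{-1}(h')$ differ, uniformly in the starting abscissa, purely in terms of fiberwise $\BV$/$W^{1,p}$ data, and then show that the resulting ``profile'' $h\mapsto g(h)$ inherits $\BV_\loc$/$W^{1,p}_\loc$ regularity. This is where the coarea formula for $H$ (which is Lipschitz and, on $\Omega$, submersive in the sense that its level sets are Lipschitz graphs with the bound \eqref{E_L}) is essential: it lets one disintegrate $|Db|$ and $\L^2\rest\Omega$ along level sets of $H$ so that the fiber integrals defining $g$ are finite for a.e.\ $h$ and assemble into a function of bounded variation (resp.\ Sobolev), with norm controlled by the global norm of $b$. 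All other steps are the explicit differentiation of \eqref{E_precise} and routine one-dimensional estimates.
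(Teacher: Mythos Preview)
Your plan follows the same skeleton as the paper's proof: reduce to the first coordinate via the integral identity \eqref{E_precise}, control the vertical component through the Lipschitz bound \eqref{E_L} together with $|f_h-f_{h'}|\lesssim |h-h'|$, and absorb the between-fiber variation into a single function $g$ of the level. Two points deserve correction.

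First, the same-level-set comparison is simpler than you suggest: for the scalar ODE $\dot s=b_1(\tilde f_h(s))$ with $b_1\ge\delta$, subtracting the two copies of \eqref{E_precise} gives $\int_{X_1(\bar t,z)}^{X_1(\bar t,z')}\frac{ds}{b_1}=\int_{x}^{x'}\frac{ds}{b_1}$, hence the time-$\bar t$ map is plainly $\|b\|_\infty/\delta$-Lipschitz in the starting abscissa and no auxiliary $G_h$ is needed. The paper in fact dispenses with your interpolation through $z''$ altogether: it compares $z$ and $z'$ directly by splitting the integration domain into $I_1=(x,x')$, $I_2=(x',X_1(\bar t,z))$, $I_3=(X_1(\bar t,z),X_1(\bar t,z'))$ and reads off
\[
|X_1(\bar t,z)-X_1(\bar t,z')|\le \frac{\|b\|_\infty}{\delta}\,|x-x'|+\frac{\|b\|_\infty}{\delta^2}\,|Db|\big(H^{-1}(I(h,h'))\cap\Omega\big).
\]

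Second, and more important, your candidate $g(h)=\int_{O_h}|\partial_s f_h|\,|\partial_s\tilde b_1(\cdot,f_h)|\,ds$ is the \emph{fiber density} --- morally $g'(h)$ --- not the function whose increments you need. With that choice $|g(h)-g(h')|$ does not control $|Db|$ of the strip between the two level sets. The correct object is the cumulative distribution
\[
g(h):=|Db|\big(\{H\le h\}\cap\Omega\big)\qquad\Big(\text{resp. }g(h):=\int_{\{H\le h\}\cap\Omega}|\nabla b|\,dz\ \text{in the Sobolev case}\Big),
\]
so that for $h,h'\in R$ one has exactly $|g(h)-g(h')|=|Db|(H^{-1}(I(h,h'))\cap\Omega)$. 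Then $g\in\BV_\loc$ is immediate from $Dg=H_\sharp(|Db|\llcorner\Omega)$, and in the Sobolev case $g'\in L^p$ follows from coarea plus Jensen, as you correctly anticipate.
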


\begin{proof}
We denote by $h=H(z)$ and $h'=H(z')$. By \eqref{E_precise} it follows that 
\begin{equation}\label{E_period}
\int_{x}^{X_1(\bar t,z)}\frac{1}{b_1(\tilde f_{h}(s))}ds = \bar t = \int_{x'}^{X_1(\bar t,z')}\frac{1}{b_1(\tilde f_{h'}(s))}ds, 
\end{equation}
where $\tilde f_h$ and $\tilde f_{h'}$ are defined in \eqref{E_tildef}.
Without loss of generality we assume $x\le x'$ and we also suppose that $X_1(\bar t, z)\le X_1(\bar t, z')$, being the opposite case analogous.
We first estimate the distance of the horizontal components of the flows.
We denote by
\begin{equation*}
I_1=(x,x'), \quad I_2 = (x', X_1(\bar t, z)), \quad I_3=(X(\bar t, z), X(\bar t,z')).
\end{equation*}
If $I_2 = \emptyset$, since $b \cdot e_1>\delta$ in $\Omega$, then
\begin{equation*}
|z'-z|\ge |x'-x|\ge |X_1(\bar t,z) - x| \ge \bar t \delta, 
\end{equation*}
therefore
\begin{equation*}
\begin{split}
|X_1(\bar t, z')-X_1(\bar t, z)| & \le ~ |X_1(\bar t, z')-x'| + |x'-x| + |x- X_1(\bar t, z)| \\
& \le ~ \|b\|_{\infty} \bar t + |x'-x| + \|b\|_{\infty} \bar t \\
& \le ~ \left( \frac{2\|b\|_{\infty}}{\delta} + 1\right) |x'-x|.
\end{split}
\end{equation*}
If $I_2\ne \emptyset$, it follows by \eqref{E_period} that
\begin{equation*}
\begin{split}
|X_1(\bar t, z') - X_1(\bar t, z)| & \le ~ \|b\|_{\infty}\int_{I_3}\frac{1}{b_1(\tilde f_{h'}(s))}ds \\
& = ~ \|b\|_{\infty}\left( \int_{I_1}\frac{1}{b_1(\tilde f_{h}(s))}ds + \int_{I_2}\frac{1}{b_1(\tilde f_{h}(s))}ds - \int_{I_2} \frac{1}{b_1(\tilde f_{h'}(s))}ds\right) \\
& \le ~ \frac{\|b\|_{\infty}}{\delta}|x'-x| + \|b\|_{\infty}\int_{I_2}\left|\frac{1}{b_1(\tilde f_{h}(s))}-\frac{1}{b_1(\tilde f_{h'}(s))}\right| ds\\
& \le ~ \frac{\|b\|_{\infty}}{\delta}|x'-x| + \frac{\|b\|_{\infty}}{\delta^2}|Db| (H^{-1}(I(h,h'))),
\end{split}
\end{equation*}
where $I(h,h')$ denotes the closed interval with endpoints $h$ and $h'$; in the last inequality we used that the function $v\mapsto 1/v$ is $\delta^{-2}$-Lipschitz on $(\delta,+\infty)$.
Then we estimate the difference of the vertical components:
\begin{equation}\label{E_vert}
|X_2(\bar t, z')-X_2(\bar t, z)|  \le ~ |X_2(\bar t, z') - f_{h'}(X_1(\bar t, z))| + |f_{h'}(X_1(\bar t, z)) - X_2(\bar t, z)|.
\end{equation}
By definition of $f_{h'}$ it holds
\begin{equation}\label{E_vert1}
|X_2(\bar t, z') - f_{h'}(X_1(\bar t, z))| = |f_{h'}(X_1(\bar t, z')) - f_{h'}(X_1(\bar t, z))| \le L |X_1(\bar t, z') - X_1(\bar t, z)|,
\end{equation}
where $L$ denotes the Lipschitz constant of the function $f_{h'}$ and is bounded by $\frac{C^2\|b\|_{L^\infty}}{\delta}$ as in \eqref{E_L}.
By definition of $f_h$ we have that
\begin{equation}\label{E_vert2}
\begin{split}
 |f_{h'}(X_1(\bar t, z)) - X_2(\bar t, z)| = & ~  |f_{h'}(X_1(\bar t, z)) - f_{h}(X_1(\bar t, z))| \\
 \le & ~ \frac{|h'-h|}{\inf_\Omega |\partial_2  H|} \\
 \le & ~ \frac{C|h'-h|}{\delta} \\
 \le &~ \frac{C^2\|b\|_{L^\infty}|z'-z|}{\delta}
 \end{split}
\end{equation}
where we used $b_1\ge \delta$, $\partial_y H = rb_1$ and $\|\nabla H\|_{L^\infty}\le C\|b\|_{L^\infty}$.
Plugging \eqref{E_vert1} and \eqref{E_vert2} in \eqref{E_vert}, we finally obtain
\begin{equation*}
|X_2(\bar t, z')-X_2(\bar t, z)| \le 
\frac{C^2 \|b\|_{L^\infty}}{\delta} \left[ \left(1 + \frac{2\|b\|_{L^\infty}}{\delta}\right)|z'-z| 
+ \frac{\|b\|_{L^\infty}}{\delta^2} |Db| (H^{-1}(I(h,h'))) 
\right]
\end{equation*}
so that \eqref{E_Lip_loc} holds with
\begin{equation*}
C'=\frac{C^2\|b\|_{\infty}}{\delta^2}\left(1+\frac{2\|b\|_{L^\infty}}{\delta}+\frac{\|b\|_{L^\infty}}{\delta^2}\right), \qquad g(h)=  |Db| (\{H\le h\}).
\end{equation*}
Notice that $g \in \BV_\loc(\R)$ by construction, since $Dg = H_\sharp |Db|$ is a finite Radon measure.

If $b\in W^{1,p}(\R^2;\R^2)$ the same computation leads to \eqref{E_Lip_loc} with
\begin{equation*}
g(h) = \int_{\{H\le h\}}|Db|(z)dz.
\end{equation*}
It only remains to check that $g \in W^{1,p}_\loc(\R)$.
Denoting by $E_h=\{z\in \Omega:  H(z)=h\}$, by the coarea formula we have that 
\begin{equation*}
|\nabla H| \L^2\llcorner \Omega = \int_\R \H^1\llcorner E_h dh \qquad \mbox{so that} \qquad \L^2 = \int_\R \frac{1}{|\nabla H|}\H^1\llcorner E_h dh
\end{equation*}
and therefore
\begin{equation*}
g'(h) =  \int_{ E_h } \frac{|Db|}{|\nabla H|} d\H^1.
\end{equation*}
Being $|\nabla  H| = |r b | >\delta/C$, then by Jensen's inequality and co-area formula we get
\begin{equation}\label{E_gp}
\begin{split}
\int |g'|^p = &~ \int_\R \left| \int_{E_h} \frac{|Db|}{|\nabla  H|} d\H^1\right|^p dh \\
\le &~ \int_\R \frac{(C\H^1 (E_h))^{p-1}}{\delta^{p-1}}\int_{ E_h}  \frac{|Db|^p}{|\nabla  H|} d\H^1 dh \\
\le &~ \left(\frac{2C\sqrt{1+L^2}R}{\delta}\right)^{p-1} \int_\Omega |Db|^p,
\end{split}
\end{equation}
where $L$ is as above. This concludes the proof of the proposition.
\end{proof}

In order to conclude the proof of Proposition \ref{P_local}, we deduce in the following two lemmas the BV and Sobolev regularity of the flow from the pointwise estimate obtained in Proposition \ref{P_Lip_NI}.
\begin{corollary}\label{C_BV}
In the same setting as in Proposition \ref{P_Lip_NI} let $\Omega' \subset \Omega$ be an open set and $\bar t>0$ be such that $\dist(\Omega', \partial \Omega)>\|b\|_{L^\infty}\bar t$.
Then 
\begin{equation*}
X(\bar t) \in \BV(\Omega').
\end{equation*}
\end{corollary}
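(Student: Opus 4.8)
The plan is to exhibit $X(\bar t,\cdot)\rest\Omega'$ as a Lipschitz function of the $\BV$ map $z\mapsto(z,u(z))$, where $u:=g\circ H$ and $g$ is the monotone function from Proposition \ref{P_Lip_NI} (so $g(h)=|Db|(\{H\le h\})$ and $Dg=H_\sharp(|Db|\rest\Omega)$ is a finite measure of total mass $|Db|(\Omega)$). By \eqref{E_Lip_loc} the map $(z,u(z))\mapsto X(\bar t,z)$ is well defined and $C'$-Lipschitz (for the $\ell^1$ distance) on $\{(z,u(z)):z\in\Omega',\ H(z)\in R\}$. Extending it to a Lipschitz $\Psi:\R^3\to\R^2$ by McShane's lemma and using that $\{z\in\Omega':H(z)\notin R\}$ is $\L^2$-null — indeed $\{h:|Db|(H^{-1}(h)\cap\Omega)>0\}$ is countable because the fibers are disjoint and $|Db|$ is finite, hence its $H$-preimage is $\L^2$-null by the coarea formula since $|\nabla H|=|rb|\ge\delta/C>0$ — we get $X(\bar t,z)=\Psi(z,u(z))$ for $\L^2$-a.e.\ $z\in\Omega'$. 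Since $z\mapsto z$ belongs to $W^{1,\infty}(\Omega';\R^2)\subseteq\BV(\Omega';\R^2)$, once $u\in\BV(\Omega')$ is known it follows that $(z,u(z))\in\BV(\Omega';\R^3)$, and hence, by the standard stability of $\BV$ under post-composition with Lipschitz maps, $X(\bar t,\cdot)=\Psi(\cdot,u(\cdot))\in\BV(\Omega';\R^2)$.

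It thus remains to prove $u=g\circ H\in\BV(\Omega')$, for which I use the difference-quotient characterization of $\BV$. Taking the right-continuous representative $g(h)=Dg((-\infty,h])$, one has $|u(z+w)-u(z)|=Dg(J(z,w))$, where $J(z,w)$ is the half-open interval with endpoints $H(z)$ and $H(z+w)$. Fix $w$ with $0<|w|<\dist(\Omega',\partial\Omega)-\|b\|_{L^\infty}\bar t$, so that $[z,z+w]\subseteq\Omega$ for every $z\in\Omega'$. By Tonelli,
\begin{equation*}
\int_{\Omega'}|u(z+w)-u(z)|\,dz=\int_\R\L^2\big(\{z\in\Omega':h\in J(z,w)\}\big)\,dDg(h),
\end{equation*}
and if $h\in J(z,w)$ then, by the intermediate value theorem applied to the continuous map $s\mapsto H(z+sw)$ on $[0,1]$, the segment $[z,z+w]$ meets $\{H=h\}$; hence $\{z\in\Omega':h\in J(z,w)\}$ lies in the $|w|$-neighbourhood of $\{H=h\}\cap\Omega$. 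In $\Omega$ this level set is the graph of the $L$-Lipschitz function $f_h$ over an interval of length at most $2R$, so it has $\H^1$-measure at most $2R\sqrt{1+L^2}$ and its $|w|$-neighbourhood has $\L^2$-measure at most $4R\sqrt{1+L^2}\,|w|+\pi|w|^2\le C''|w|$ for $|w|\le1$; therefore $\int_{\Omega'}|u(z+w)-u(z)|\,dz\le C''|w|\,|Db|(\Omega)$. Combined with $u\in L^\infty(\Omega')$, the difference-quotient criterion yields $u\in\BV(\Omega')$.

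The only genuine difficulty is the geometric estimate on the $|w|$-neighbourhood of the level sets, which is exactly where the hypothesis $b\cdot e>\delta$ enters — through the uniform Lipschitz bound \eqref{E_L} on the $f_h$ and the uniform lower bound on $|\nabla H|$ — together with the boundedness of $\Omega$; everything else is the routine machinery of $\BV$ functions. (Equivalently, one can run the difference-quotient estimate directly on $X(\bar t,\cdot)$ by inserting \eqref{E_Lip_loc} under the integral, which reduces matters to the same estimate for $u$ and bypasses the chain rule.)
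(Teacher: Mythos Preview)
Your argument is correct and follows a genuinely different route from the paper. Both proofs reduce the claim to $g\circ H\in\BV(\Omega')$; you make this reduction explicit via the McShane extension $\Psi$ and the stability of $\BV$ under Lipschitz post-composition, while the paper simply states it. For that step the paper approximates $g$ by smooth $g_n$ with $\TV_\R(g_n)\le\TV_\R(g)$, applies the chain rule to $g_n\circ H$, and uses the coarea formula to rewrite $\int_{\Omega'}|g_n'(H(z))||\nabla H(z)|\,dz$ as $\int|g_n'(h)|\rho(h)\,dh$ with $\rho=d(H_\sharp(\L^2\rest\Omega'))/d\L^1$ uniformly bounded; lower semicontinuity of the total variation under $L^1$ convergence then finishes. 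Your route via the difference-quotient criterion, Tonelli, and a tubular-neighbourhood estimate for the level sets $\{H=h\}$ is more geometric and avoids both the smoothing step and the coarea formula in the core estimate, at the price of being slightly longer. The paper's version yields the constants in a marginally cleaner form; yours is self-contained and arguably more elementary. One small caveat: the paper only asserts that $O_h$ is an open \emph{set}, not an interval, so your tubular bound $4R\sqrt{1+L^2}\,|w|+\pi|w|^2$ needs a word of justification when $O_h$ is disconnected. This is harmless, since $f_h$ is $L$-Lipschitz for the ambient metric on $\R$ (from $\partial_2 H\ge\delta/C$) and hence extends by McShane to the full projection interval of $\Omega$; the graph of the extension is a single connected Lipschitz curve of length at most $2R\sqrt{1+L^2}$ containing $\{H=h\}\cap\Omega$, and your estimate applies to it verbatim.
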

\begin{proof}
From Proposition \ref{P_Lip_NI} it is sufficient to check that $g\circ H\in \BV(\Omega')$. Let $g_n$ be a sequence of smooth functions converging to $g$ in $L^1(\R)$ with $\TV_\R (g_n) \le \TV_\R (g)$.
By coarea formula 
\begin{equation*}
 H_\sharp (\L^2\llcorner \Omega') = \rho \L^1, \qquad \mbox{with} \quad\rho(h) = \int_{E_h}\frac{1}{|\nabla  H|}d\H^1.
\end{equation*}
In particular 
\begin{equation*}
\rho(h) \le \frac{C\H^1(E_h)}{\delta} \le \frac{2RC\sqrt{1+L^2}}{\delta}
\end{equation*}
is uniformly bounded. 
Hence $g_n\circ  H$ converges in $L^1(\Omega')$ to $g\circ  H$ and
\begin{equation*}
\begin{split}
\TV_{\Omega'} (g \circ  H) \le &~ \liminf_{n\to \infty} \TV_{\Omega'}(g_n\circ  H) \\
= &~  \liminf_{n\to \infty} \int_{\Omega'} |g_n'( H(z))\nabla  H(z)| dz \\
\le &~ \liminf_{n\to \infty} \|\nabla  H\|_{L^\infty} \int |g_n'(h)|\rho(h)dh \\
\le &~  \|\nabla  H\|_{L^\infty} \|\rho\|_{L^\infty} \TV_\R(g). \qedhere
\end{split}
\end{equation*}
\end{proof}

\begin{corollary}\label{C_W1p}
Let us consider the same setting as in Proposition \ref{P_Lip_NI} with $b\in W^{1,p}(\R^2;\R^2)$. Let $\Omega' \subset \Omega$ be an open set and $\bar t>0$ be such that $\dist(\Omega', \partial \Omega)>\|b\|_{L^\infty}\bar t$. Then
\begin{equation*}
X(\bar t) \in W^{1,p}(\Omega').
\end{equation*}
\end{corollary}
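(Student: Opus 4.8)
The plan is to follow the scheme of Corollary \ref{C_BV}: by the pointwise estimate \eqref{E_Lip_loc} it suffices to prove that $g\circ H\in W^{1,p}(\Omega')$, where $g$ is the function produced by Proposition \ref{P_Lip_NI} in the Sobolev case, and then to express $X(\bar t)$ as the composition of a Lipschitz map with a $W^{1,p}$ map.

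First I would show $g\circ H\in W^{1,p}(\Omega')$. Proposition \ref{P_Lip_NI} gives $g\in W^{1,p}_\loc(\R)$, and since $H$ is Lipschitz on the ball $\Omega$ the image $H(\overline\Omega)$ is contained in a bounded interval $J$, on which $g'\in L^p$ by \eqref{E_gp}. Choose smooth $g_n\to g$ in $W^{1,p}(J')$ for some bounded open $J'\supset\overline J$. Each $g_n\circ H$ is Lipschitz on $\Omega'$, hence lies in $W^{1,p}(\Omega')$ with $\nabla(g_n\circ H)=g_n'(H)\nabla H$. Writing $H_\sharp(\L^2\rest\Omega')=\rho\,\L^1$ and using the bound $\rho\le 2RC\sqrt{1+L^2}/\delta$ established in the proof of Corollary \ref{C_BV}, the coarea formula gives
\begin{gather*}
\|g_n\circ H-g\circ H\|_{L^p(\Omega')}^p\le\|\rho\|_{L^\infty}\,\|g_n-g\|_{L^p(J)}^p,\\
\|\nabla(g_n\circ H)-\nabla(g_m\circ H)\|_{L^p(\Omega')}^p\le\|\nabla H\|_{L^\infty}^p\,\|\rho\|_{L^\infty}\,\|g_n'-g_m'\|_{L^p(J)}^p ,
\end{gather*}
so $(g_n\circ H)_n$ is Cauchy in $W^{1,p}(\Omega')$; its limit is $g\circ H$, which therefore belongs to $W^{1,p}(\Omega')$ with $\nabla(g\circ H)=g'(H)\nabla H$.

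Next I would set $\Phi:\Omega'\to\R^3$, $\Phi(z):=(z,(g\circ H)(z))$, so that $\Phi\in W^{1,p}(\Omega';\R^3)\cap L^\infty$ and $\Phi$ is injective. Let $A:=\{z\in\Omega':H(z)\in R\}$. Since $|Db|(\Omega)<\infty$ and the sets $H^{-1}(h)\cap\Omega$ are pairwise disjoint, $H(\Omega)\setminus R$ is at most countable, hence $\L^1$-null, and since $|\nabla H|=|rb|>0$ a.e. the coarea formula yields $\L^2(\Omega'\setminus A)=0$. On $\Phi(A)$ define $\Lambda(\Phi(z)):=X(\bar t,z)$, which is meaningful because $\Phi$ is injective; the estimate \eqref{E_Lip_loc}, which applies to all $z,z'\in\Omega'$ since $\dist(\Omega',\partial\Omega)>\|b\|_{L^\infty}\bar t$, shows that $\Lambda$ is $\sqrt2\,C'$-Lipschitz on $\Phi(A)$. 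By Kirszbraun's (or McShane's) extension theorem $\Lambda$ extends to a Lipschitz map $\overline\Lambda:\R^3\to\R^2$, and then $X(\bar t,z)=\overline\Lambda(\Phi(z))$ for a.e. $z\in\Omega'$. Finally the composition of the Lipschitz map $\overline\Lambda$ with $\Phi\in W^{1,p}(\Omega';\R^3)$ belongs to $W^{1,p}(\Omega';\R^2)$, whence $X(\bar t)\in W^{1,p}(\Omega')$.

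I expect the only delicate points to be the two chain-rule statements, namely that $g\circ H$ and $\overline\Lambda\circ\Phi$ are Sobolev and not merely $\BV$. Both follow by approximating the inner map by smooth maps in $W^{1,p}$ and passing to the limit, the approximating gradients being dominated in $L^p$ by $\|\nabla H\|_{L^\infty}|g_n'(H)|$ respectively by $\Lip(\overline\Lambda)\,|\nabla\Phi|$; at the endpoint $p=1$ weak $L^1$-compactness of these gradients is not available and must be replaced by their equi-integrability (which comes from the domination just mentioned) together with the Dunford--Pettis theorem. Everything else is bookkeeping of coarea estimates already carried out in \eqref{E_gp} and in the proof of Corollary \ref{C_BV}.
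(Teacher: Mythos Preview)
Your proof is correct and follows the same route as the paper: reduce to $g\circ H\in W^{1,p}(\Omega')$ and verify this via the coarea formula. The paper's version is terser---it writes the chain rule $\nabla(g\circ H)=g'(H)\nabla H$ and the coarea bound directly, and leaves the implication ``$g\circ H\in W^{1,p}\Rightarrow X(\bar t)\in W^{1,p}$'' implicit in the phrase ``it is sufficient to check''---whereas you spell out both points (the chain rule by approximation, the implication via the Kirszbraun extension and the Lipschitz-$\circ$-$W^{1,p}$ composition), which is a welcome addition of rigor rather than a different argument.
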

\begin{proof}
From Proposition \ref{P_Lip_NI} it is sufficient to check that $g\circ  H \in W^{1,p}(\Omega')$. By chain rule and coarea formula we have
\begin{equation*}
\begin{split}
 \int_{\Omega'} |D (g\circ  H)|^p dz \le &~ \int_{\Omega'}|g' \circ  H|^p |\nabla  H|^p dz \\
 \le  &~ (C\|b\|_{\infty})^{p-1} \int_{\Omega'}|g' \circ  H|^p |\nabla  H| dz \\
 \le  &~ (C\|b\|_{\infty})^{p-1} \int_\R \int_{E_h}|g'\circ  H|^pd \H^1dh \\
 =  &~ (C\|b\|_{\infty})^{p-1} \int_\R |g'|^p(h) \H^1(E_h)dh \\
 \le &~ 2\sqrt{1+L^2}R(C\|b\|_{\infty})^{p-1}\int_\R |g'|^p(h)dh. \qedhere
 \end{split}
\end{equation*}
\end{proof}

\section{Global estimate for divergence free vector fields}
In this section we prove Theorem \ref{T_global}.
In the next lemma we show that we can cover $\Omega$ with countably many open sets invariant for the flow and such that $|b|$ is uniformly bounded from below far from 0.
\begin{lemma}\label{L_Omega_k}
Let $b$ and $\Omega$ as in Theorem \ref{T_global} and let $H \in C^1_c(\R^2)$ be the Hamiltonian associated to $b$ as in \eqref{E_Hamiltonian}.
For every $k\in \N$ let $\Omega_k := H^{-1}(\{ h: \min_{H^{-1}(h)}|b|>1/k \})$. Then $\Omega_k$ is open,
\begin{equation*}
\overline \Omega_k \subset \Omega_{k+1}, \qquad \mbox{and} \qquad \Omega = \bigcup_{k\in \N} \Omega_k.
\end{equation*}
\end{lemma}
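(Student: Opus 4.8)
The plan is to reduce all three claims to elementary properties of the function
\[
m\colon\R\to[0,+\infty],\qquad m(h):=\min_{H^{-1}(h)}|b|,
\]
with the convention $m(h):=+\infty$ when $H^{-1}(h)=\emptyset$; then $\Omega_k=H^{-1}(\{m>1/k\})$. First I would record two facts that the hypothesis $H\in C^1_c(\R^2)$ supplies for free: outside the compact set $\supp H$ both $H$ and $\nabla H$ vanish, hence $b$ vanishes there too, so picking a point outside $\supp H$ shows $0\in\mathcal S$; consequently $\Omega\subset\{H\neq0\}\subset\supp H$, for every $h\neq0$ the set $H^{-1}(h)$ is compact and the minimum defining $m(h)$ is attained, and moreover $m(0)=0$.

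Next I would check that $m$ is lower semicontinuous on $\R\setminus\{0\}$. If $h_n\to h\neq0$ with $\liminf_n m(h_n)=:c<+\infty$, then along a subsequence $H^{-1}(h_n)$ is nonempty and compact (here $h_n\neq0$ eventually), so one may choose $x_n\in H^{-1}(h_n)$ with $|b(x_n)|=m(h_n)\to c$; all $x_n$ lie in $\supp H$, so up to a further subsequence $x_n\to x$, and continuity of $H$ and of $b$ gives $H(x)=h$, $|b(x)|=c$, hence $m(h)\le c$. Together with $m(0)=0$ this shows $\{m>1/k\}=\{h\neq0:m(h)>1/k\}$ is open, and therefore so is $\Omega_k$. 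For the exhaustion: if $z\in\Omega_k$ then $|b|\ge m(H(z))>1/k>0$ on all of $H^{-1}(H(z))$, so $H(z)\notin\mathcal S$ and $z\in\Omega$; conversely if $z\in\Omega$ then $h=H(z)$ is a regular value and $h\neq0$, so $|b|>0$ on the nonempty compact set $H^{-1}(h)$ and thus $m(h)>0$, whence $z\in\Omega_k$ for every $k>1/m(h)$.

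The remaining inclusion $\overline{\Omega_k}\subset\Omega_{k+1}$ is the step I expect to be the main obstacle; here the closure must be understood relative to $\Omega$ (equivalently: every $z\in\Omega$ that is a limit of points of $\Omega_k$ lies in $\Omega_{k+1}$), and this localization cannot be dropped, since a point of $\partial\Omega$ whose level set meets a zero of $b$ may be a limit of points of $\Omega_k$ without lying in any $\Omega_{k+1}$. So let $z\in\Omega$ and $z_n\in\Omega_k$ with $z_n\to z$; put $h=H(z)$, $h_n=H(z_n)\to h$. Fix an arbitrary $w\in H^{-1}(h)$. Since $h$ is a regular value, $\nabla H(w)\neq0$, so the $C^1$ map $t\mapsto H(w+t\nabla H(w))$ has derivative $|\nabla H(w)|^2>0$ at $t=0$; the one-dimensional inverse function theorem then yields $\e>0$ and a continuous curve $s\mapsto w(s)$ on $(h-\e,h+\e)$ with $w(h)=w$, $H(w(s))=s$ and $w(s)\to w$ as $s\to h$. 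For $n$ large enough that $|h_n-h|<\e$ we get $w(h_n)\in H^{-1}(h_n)$, so $|b(w(h_n))|\ge m(h_n)>1/k$; letting $n\to\infty$ and using continuity of $b$ gives $|b(w)|\ge1/k$. As $w\in H^{-1}(h)$ was arbitrary, $m(h)=\min_{H^{-1}(h)}|b|\ge1/k>1/(k+1)$, i.e.\ $z\in\Omega_{k+1}$. The only genuinely analytic ingredient is this local description of the level sets of the $C^1$ function $H$ near a regular point, and it is precisely the breakdown of that description at critical points that forces the closure above to be taken inside $\Omega$; the rest is compactness-and-continuity bookkeeping organized around $m$.
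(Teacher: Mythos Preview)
Your approach coincides with the paper's: both reduce the three claims to properties of the function $m(h)=\min_{H^{-1}(h)}|b|$, and both obtain openness of $\Omega_k$ and the closure inclusion from the continuity of $m$ on $\mathcal R$. The paper's proof is a two-line sketch (``lower and upper semicontinuity follow from continuity of $b$ and compactness of the regular level sets''); you supply the details, giving the compactness argument for lower semicontinuity and the one-dimensional inverse function theorem for the upper semicontinuity at regular values. All of this is correct.

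Your remark about the closure is sharper than the paper. The lemma as stated asserts $\overline{\Omega_k}\subset\Omega_{k+1}$ without qualification, but, as you note, continuity of $m$ on $\mathcal R$ can only yield the closure \emph{relative to} $\Omega$, and this is indeed all the paper's own argument delivers. The full closure in $\R^2$ can fail: take $H=\psi(|z|)$ with $\psi\in C^1_c$, $\psi(0)=\tfrac12$, $\psi$ strictly increasing to $1$ on $[0,1]$ and strictly decreasing to $0$ on $[1,2]$; then $m(h)\to|\psi'(r^*)|>0$ as $h\nearrow\tfrac12$ (where $r^*\in(1,2)$ solves $\psi(r^*)=\tfrac12$), while $m(\tfrac12)=0$ because of the critical point at the origin, so the circle $\{|z|=r^*\}$ lies in $\overline{\Omega_k}\setminus\Omega$ for suitable $k$. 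Your interpretation is therefore the correct one, and it is exactly what the paper's proof justifies.
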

\begin{proof}
The last equality follows immediately from the definition of $\Omega$ and the continuity of $b$. In order to complete the proof it is sufficient to check that
the map 
\begin{equation*}
h \mapsto  \min_{H^{-1}(h)}|b|
\end{equation*}
is continuous on the set $\mathcal R$ of regular values of $H$. 
Both the lower semicontinuity and the upper semicontinuity are straightforward consequences of the continuity of $b$ and the compactness of the level sets $H^{-1}(h)$ with regular value $h$.
\end{proof}

The main estimate in the proof of Theorem \ref{T_global} is proven in the following lemma.

\begin{lemma}\label{L_main}
Let $H \in C^1_c(\R^2)$ be such that $b := \nabla ^\perp H \in \BV(\R^2)$ and let $k\in \N$ and $\Omega_k\subset \R^2$ be as above.
Then there exist a representative of the regular Lagrangian flow $X$, $g \in \BV_\loc \cap C^0(\R)$ and $r>0$ such that for every $t>0$ the following holds:
there exist $c_1>0$ and $c_2>0$ such that $\forall \bar z \in \Omega _k$ and every $z \in B_r(\bar z)$ there exists $s>0$ such that
\begin{enumerate}
\item $|X(t,\bar z) -X(s,z)| \le c_1 |H(\bar z) - H(z)|$ \\
\item $ |t-s| \le c_2 \left( |g(H(\bar z)) - g(H(z))| + |\bar z -z|\right)$.
\end{enumerate}
\end{lemma}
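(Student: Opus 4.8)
The idea is to localize the global divergence-free problem on $\Omega_k$ to a setting where Proposition~\ref{P_Lip_NI} (or rather its proof) applies. On $\Omega_k$ we have $|b| = |\nabla H| > 1/k$, so near any $\bar z \in \Omega_k$ we may rotate coordinates so that, on a small ball $B_r(\bar z)$, one component of $b$ is bounded below by a positive constant $\delta = \delta(k)$ — say $b\cdot e > \delta$ for a suitable $e \in \S^1$ depending on $\bar z$. Since $\overline{\Omega_k}$ is compact and $b$ is continuous, finitely many such balls $B_r(\bar z_j)$ cover $\overline{\Omega_k}$, and we may take a single radius $r>0$ and a single $\delta>0$ working for all of them. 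On each such ball we are exactly in the situation of Proposition~\ref{P_local}/Proposition~\ref{P_Lip_NI} with $\rho \equiv 1$ (divergence-free is the steady nearly incompressible case with $C=1$), so the local Hamiltonian coincides with $H$ up to an additive constant and the level-set parametrization $y = f_h(x)$ from the excerpt is available.

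\textbf{Key steps.} First I would fix $\bar z \in \Omega_k$, choose the ball $B = B_r(\bar z)$ and the direction $e = e_1$ (after rotation) with $b_1 > \delta$ on $B$, and recall the precise representative $X_1$ determined by \eqref{E_precise} with $h = H(\bar z)$. Given $z \in B_r(\bar z)$ with $h' := H(z)$, I define $s>0$ by the requirement that $X(s,z)$ lie ``above'' $X(t,\bar z)$ in the sense that its first coordinate equals $X_1(t,\bar z)$ — i.e. $s$ is the time for the trajectory through $z$ on the level set $\{H = h'\}$ to reach the vertical line $\{x = X_1(t,\bar z)\}$. (One must check $s>0$ is well-defined, using $b_1>\delta$ and $r$ small compared to $\|b\|_\infty t$; this is why the lemma only claims the estimates for $z$ in a small ball, with $r$ independent of $t$ but the constants $c_1,c_2$ allowed to depend on $t$.) With this choice, $X_1(s,z) = X_1(t,\bar z)$, so the two flowed points have the same first coordinate and
\[
|X(t,\bar z) - X(s,z)| = |f_{h}(X_1(t,\bar z)) - f_{h'}(X_1(t,\bar z))| \le \frac{|h - h'|}{\inf_B |\partial_2 H|} \le \frac{|H(\bar z) - H(z)|}{\delta},
\]
which gives (1) with $c_1 = 1/\delta$, exactly as in \eqref{E_vert2} of the excerpt. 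For (2), I would estimate $|t - s|$ by the same mechanism as the horizontal estimate in the proof of Proposition~\ref{P_Lip_NI}: writing both $t$ and $s$ as integrals of $1/b_1$ along the respective level sets via \eqref{E_period}, the difference splits into a piece controlled by $|\bar z - z|$ (the mismatch in starting $x$-coordinates, times $1/\delta$) and a piece controlled by $|Db|(H^{-1}(I(h,h')))$ coming from the $\delta^{-2}$-Lipschitz estimate on $v \mapsto 1/v$. Setting $g(h) := |Db|(\{H \le h\})$ (the same $g$ as in the excerpt, which is in $\BV_\loc(\R)$ because $Dg = H_\sharp|Db|$ is finite Radon, and is continuous because $H \in C^1_c$ has no level set of positive $|Db|$-measure on the relevant range — or one simply notes $g$ is monotone with at worst jump discontinuities and can be taken left-continuous, then upgrades to $C^0$ using that atoms of $H_\sharp|Db|$ would sit at critical values, which are excluded on $\mathcal R$), one gets $|g(H(\bar z)) - g(H(z))| \ge |Db|(H^{-1}(I(h,h')))$ and hence (2) with $c_2$ depending on $\delta$, $\|b\|_\infty$, $t$.

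\textbf{Main obstacle.} The delicate point is the well-definedness and uniform control of $s$, and making sure the constants behave correctly: $r$ must be chosen \emph{before} $t$ (so the ball $B_r(\bar z)$ on which $b$ has a good lower bound is fixed), but then for large $t$ the trajectory from $z$ may leave $B$, so the parametrization $f_{h'}$ and the identity $X_1(s,z) = X_1(t,\bar z)$ must be propagated along the whole level set $\{H = h'\}$ inside $\Omega$, not just inside $B$ — this uses the global structure from Theorem~\ref{T_global} (bounded support, $H \in C^1_c$, level sets of regular values are compact embedded circles) and the conservation of $H$ along the flow established in \cite{BBG_NI2d}. A second subtlety is continuity of $g$: since the final conclusion wants a $C^0 \cap \BV_\loc$ function, I would argue that on the range $H(\Omega_k)$, which lies in the regular values, the map $h \mapsto |Db|(\{H \le h\})$ has no jumps because $H_\sharp|Db|$ can only charge $H$-values realized on a set where $|\nabla H|$ is not bounded below — but on $\Omega_k$, $|\nabla H| > 1/k$, so the coarea estimate $H_\sharp(|Db|\rest\Omega_k) \le k\, |b|\,\H^1\rest E_h\, dh$-type bound shows absolute continuity there; globally one patches and takes a monotone (hence a.e. continuous, but one wants everywhere-continuous) representative, which is where most of the remaining care goes. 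Apart from that, every displayed estimate is a transcription of computations already performed in the proof of Proposition~\ref{P_Lip_NI}.
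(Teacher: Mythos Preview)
Your localization and definition of $s$ by ``matching first coordinates'' works only while both trajectories remain in a single ball on which a fixed direction $e$ trivializes the level sets as graphs. You correctly flag this in your ``Main obstacle'' paragraph, but you do not resolve it: the level sets of $H$ inside $\Omega$ are compact embedded circles, so there is \emph{no} global choice of direction along which they are graphs, and the sentence ``the parametrization $f_{h'}$ and the identity $X_1(s,z)=X_1(t,\bar z)$ must be propagated along the whole level set'' is not a construction. Your estimate for $|t-s|$ as a single integral of $1/b_1$ along the level set simply has no meaning once the curve turns around.

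The paper's proof supplies exactly the missing mechanism: it subdivides the time interval $[0,t]$ into $\tilde N \sim t\|b\|_{L^\infty}/\bar r$ equal pieces, so that on each piece $[t_{j-1},t_j]$ the trajectory $X(\cdot,\bar z)$ stays inside one ball $B_{\bar r}(z_{i(j)})$ of a fixed finite covering of $\overline{\Omega_k}$. On each ball your local computation (which is essentially Proposition~\ref{P_Lip_NI}) is valid, and one defines inductively intermediate times $s_j$ with $X(s_j,z)$ matching $X(t_j,\bar z)$ in the local $e_{i(j)}$-coordinate. The estimate \eqref{E_recursion} for $|t_j-s_j|$ is then built up recursively, picking up one contribution of order $|g(\bar h)-g(h)|$ and one of order $|\bar h-h|$ at each of the $\tilde N$ steps; this is why $c_2$ grows linearly in $t$. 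A further subtlety you did not see is that between consecutive charts the ``projected'' point $\tilde z_{j-1}$ (defined using $e_{i(j-1)}$) and the point $\tilde w_{j-1}$ (defined using $e_{i(j)}$) differ, and bridging them costs an extra $|\bar h - h|$ term at each step. None of this inductive chart-changing machinery is present in your proposal.

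A smaller point: you take $g(h)=|Db|(\{H\le h\})$ globally. The paper restricts to $\{H\le h\}\cap\Omega_{k+1}$, and this matters for continuity: outside $\Omega_{k+1}$ the level sets of $H$ may contain critical points and need not have finite $\H^1$ measure, so the argument ``$b$ continuous $\Rightarrow |Db|$ vanishes on sets of finite $\H^1$ measure'' does not directly give $|Db|(H^{-1}(h))=0$ for critical $h$. Restricting to $\Omega_{k+1}$ makes each level set a finite union of Lipschitz curves and the continuity of $g$ immediate.
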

\begin{proof}
The proof is divided in several steps.

\noindent
\emph{Step 1}.
By Lemma \eqref{L_Omega_k} $\Omega_k$ is compactly contained in the open set $\Omega_{k+1}$. Since $b$ is continuous and uniformly bounded from below on $\Omega_{k+1}$, for every $L>0$ there exist $\bar r>0$ and a finite covering $(B_{\bar r}(z_i))_{i=1}^N$ of $\Omega_k$ such that
\begin{enumerate}
\item for every $i=1,\ldots, N$ it holds $B_{4\bar r}(z_i)\subset \Omega_{k+1}$;
\item for every $i=1,\ldots, N$ there exists $e_i\in \S^1$ such that 
\begin{equation}\label{E_flat}
b(z) \cdot e_i \ge |b(z)| \cos (\tan^{-1}(L)) \qquad \forall z \in B_{4\bar r}(z_i).
\end{equation}
\end{enumerate}
We take $L>0$ sufficiently small so that $\cos(\tan^{-1}(L))> 1/2$ and such that
for every $i=1,\ldots, N$ and for every $h \in H(B_{3\bar r}(z_i))$ there exist an open interval $I_{i,h}\subset \R$ 
and a $L$-Lipschitz function $f_{i,h}:I_{i,h}\to \R$ such that 
\begin{equation*}
H^{-1}(h) \cap B_{4\bar r}(z_i) = \left\{ z \in \R^2: z\cdot e_i \in I_{i,h}, \quad z\cdot e_i^\perp = f_{i,h}(z\cdot e_i) \right\}.
\end{equation*}

\noindent
\emph{Step 2}.
We show that the function $g:\R\to \R$ defined by
\begin{equation*}
g(h):= |Db|(\{H\le h\}\cap \Omega_{k+1})
\end{equation*} 
is continuous and with bounded variation. 

Since $Dg= H_\sharp |Db|\llcorner \Omega_{k+1}$ is a finite measure, the function $g$ has bounded variation.
In order to prove that $g$ is continuous it is sufficient to check that for every $h\in \R$ it holds
\begin{equation*}
|Db|(H^{-1}(h)\cap \Omega_{k+1})=0.
\end{equation*}
By Step 1, the set $H^{-1}(h)\cap \Omega_{k+1}$ is the union of finitely many Lipschitz curve of finite length. Being $b$ continuous, the measure $|Db|$ vanishes on all sets with finite $\mathcal H^1$ measure (see for example \cite{AFP_book}), and this proves the continuity of $g$.

\noindent
\emph{Step 3}. Given $T>0$ and $\bar z \in \Omega_k$, we denote by 
\begin{equation*}
\tilde N := \left\lceil \frac{T\|b\|_{L^\infty}}{\bar r} \right\rceil, \qquad \mbox{and} \qquad I_j:=\left[ \frac{j-1}{\tilde N}T, \frac{j}{\tilde N}T\right]=:[t_{j-1},t_j] \quad \mbox{for }j=1,\ldots, \tilde N.
\end{equation*}
Moreover  for every $j \in 1,\ldots, \tilde N$ we consider $i=i(j) \in 1,\ldots, N$ such that
\begin{equation*}
X(t_j,\bar z) \in B_{\bar r}(z_{i(j)}).
\end{equation*}
We set
\begin{equation}\label{E_def_r}
r:= \min \left\{ \bar r, \frac{\bar r}{2(k+1)\|b\|_{L^\infty}}, \frac{T}{2\tilde N (k+1)} \right\}.
\end{equation}

For every $z\in \R^2$ with $|z-\bar z |\le r$ we prove that for every $j=1,\ldots, \tilde N$ there exists $s_j >0$ such that
\begin{equation*}
 |X(t_j,\bar z)-X(s_j,z)|\le 2(k+1) |\bar h - h|
\end{equation*}
and
\begin{equation}\label{E_recursion}
|t_j-s_j| \le 2(k+1)|z-\bar z| + j(k+1)^2|g(\bar h) - g(h)| + 2(j-1)(k+1)^2|\bar h -h|.
\end{equation}
By \eqref{E_def_r} and the definition of $(z_{i})_{i=1}^N$, for every $j=1,\ldots, \tilde N$ there exists a unique point $\tilde z_j \in \Omega_{k+1}$ in $z \in B_{2\bar r}(z_{i(j)})$ such that 
\begin{equation*}
H(\tilde z_j) = h \qquad \mbox{and} \qquad \tilde z_j \cdot e_i = X(t_j, \bar z) \cdot e_i.
\end{equation*}
We immediately have
\begin{equation*}
|X(t_j,\bar z) - \tilde z_j| = | ( X(t_j,\bar z) - \tilde z_j)\cdot e_i^\perp| \le \frac{|\bar h - h|}{\min_{B_{2\bar r}(z_{i(j)})}b\cdot e_j}\le 2(k+1)|\bar h - h|.
\end{equation*}
Notice in particular that $|X(t_j,\bar z) - \tilde z_j|\le \bar r$ by \eqref{E_def_r}.
In order to prove the claim, it is sufficient to show that for every $j=1,\ldots, \tilde N$, there exists $s_j\ge 0$ satisfying \eqref{E_recursion} and such that $X(s_j)=\tilde z_j$. We prove this by induction on $j$.

\noindent \emph{Case $j=1$}. 
First we observe that 
\begin{equation*}
\left\{ \bar z \cdot e_{i(1)}, z \cdot e_{i(1)}, X(t_1,\bar z) \cdot e_{i(1)}\right\} \subset I_{i(1),h} \cap I_{i(1),\bar h}.
\end{equation*}
In particular $z$ and $\tilde z_1$ belong to the same connected component of $H^{-1}(h)$.
Since $|z-\bar z|< r$ it trivially holds
\begin{equation*}
 \bar z \cdot e_{i(1)} - r< z \cdot e_{i(1)}< \bar z \cdot e_{i(1)} + r.
\end{equation*}
Moreover 
\begin{equation*}
X(t_1, \bar z) \cdot e_{i(1)} \ge \bar z \cdot e_{i(1)} +  t_1 \min_{B_{4\bar r}(z_{i(1)})}b\cdot e_{i(1)} \ge \bar z \cdot e_{i(1)} + \frac{t_1}{2(k+1)} >  \bar z \cdot e_{i(1)} + r.
\end{equation*}
We assume $ z \cdot e_{i(1)} \ge \bar z \cdot e_{i(1)}$, being the opposite case analogous. We denote by $\tilde t_0\ge 0$ the unique 
$t \in [0,t_1)$ such that $X(t,\bar z) \cdot e_{i(1)}= z \cdot e_{i(1)}$.
We have 
\begin{equation*}
\tilde t_0 \le \frac{|z-\bar z|}{\min_{B_{4\bar r}(z_{i(1)})} b\cdot e_{i(1)}} \le 2(k+1)|z-\bar z|.
\end{equation*}
Moreover
\begin{equation*}
t_1-\tilde t_0= \int_{z\cdot e_{i(1)}}^{\tilde z_1 \cdot e_{i(1)}} \frac{1}{b\cdot e_{i(1)}}(\bar f_{i,\bar h}(x))dx
\end{equation*}
and similarly
\begin{equation*}
s_1 = \int_{z\cdot e_{i(1)}}^{\tilde z_1 \cdot e_{i(1)}}\frac{1}{b\cdot e_{i(1)}}(\bar f_{i, h}(x))dx.
\end{equation*}
We denote by 
\begin{equation*}
S:= \{ z' \in \R^2 : z'\cdot e_{i(1)}\in (z\cdot e_{i(1)}, \tilde z_1 \cdot e_{i(1)}), z'\cdot e_{i(1)} \in
(f_{i(1),h}(z'\cdot e_{i(1)}), f_{i(1),\bar h}(z'\cdot e_{i(1)})  )   \}.
\end{equation*}
Therefore
\begin{equation*}
\begin{split}
|t_1 - s_1| \le &~ |\tilde t_0| + |t_1-\tilde t_0 - s_1| \\
\le &~ 2(k+1)|z-\bar z|+ \int_{z\cdot e_{i(1)}}^{\tilde z_1 \cdot e_{i(1)}} \left| \frac{1}{b\cdot e_{i(1)}}(\bar f_{i,\bar h}(x)) - \frac{1}{b\cdot e_{i(1)}}(\bar f_{i, h}(x)) \right| dx \\
\le &~ 2(k+1)|z-\bar z| + \left(\frac{1}{\inf_{S} b \cdot e_{i(1)}}\right)^2 |Db|(S) \\
\le  &~ 2(k+1) |z-\bar z| + 4(k+1)^2 |g(\bar h)- g(h)|,
\end{split} 
\end{equation*}
where, in the third inequality we used that the maps $v \mapsto 1/v$ is $1/\delta^2$-Lipschitz on $[\delta, +\infty)$ with $\delta= \inf_{S} b\cdot e_{i(1)}$.
This proves \eqref{E_recursion} for $j=1$.

\noindent \emph{Case $j>1$}. We assume 
\begin{equation}\label{E_hyp_rec}
|t_{j-1}-s_{j-1}| \le 2(k+1)|z-\bar z| + (j-1)(k+1)^2|g(\bar h) - g(h)| + 2(j-2)(k+1)^2|\bar h -h|
\end{equation}
and we prove \eqref{E_recursion}. 
We observe that $|X(t_j,\bar z)-X(t_{j-1},\bar z)|\le \bar r$, therefore \eqref{E_flat} implies that $b \cdot e_{i(j)}> 1/(k+1)$ in $B_{2\bar r}(X(t_{j-1},\bar z))$.
In particular we can define $\tilde w_{j-1}$ as the unique $z\in B_{2\bar r}(X(t_{j-1},\bar z))$ such that 
\begin{equation*}
H(z) = h \qquad \mbox{and} \qquad   z \cdot e_{i(j)} = X(t_{j-1},\bar z) \cdot e_{i(j)}.
\end{equation*}
We have
\begin{equation*}
|\tilde w_{j-1}- \tilde z_{j-1}| \le |\tilde w_{j-1}- X(t_{j-1},\bar z)| + |X(t_{j-1},\bar z)- \tilde z_{j-1}| \le 2(k+1)|h-\bar h|.
\end{equation*}
Since $\tilde w_{j-1}\cdot e_{i(j)}, \tilde z_{j-1}\cdot e_{i(j)} \in I_{i(j-1),h}$, then there exists $s_{j-1}>0$ such that
$\tilde w_{j-1}= X(\tilde s_{j-1}, z)$ with 
\begin{equation}\label{E_est1}
|\tilde s_{j-1} - s_{j-1}| \le \frac{2(k+1)}{\min_{B_{4\bar r}(z_{i(j-1)}))} b\cdot e_{i(j-1)}}|\bar h - h| \le 2(k+1)^2 |\bar h -h|.
\end{equation}
By the triangular inequality
\begin{equation}\label{E_triangular}
\begin{split}
|t_j - s_j| \le  &~ | t_j - t_{j-1} + t_{j-1} - s_{j-1} + s_{j-1} - \tilde s_{j-1} + \tilde s_{j-1} - s_j| \\
\le &~ |t_{j-1}-s_{j-1}| + |t_j - t_{j-1}- s_j + \tilde s_{j-1}| + | \tilde s_{j-1}- s_{j-1}|.
\end{split}
\end{equation}
The same computation as in the case $j=1$ gives
\begin{equation}\label{E_est2}
\begin{split}
|t_j - t_{j-1}- s_j + \tilde s_{j-1}| = &~ \left| \int_{X(t_{j-1},\bar z)\cdot e_{i(j)}}^{X(t_{j},\bar z)\cdot e_{i(j)}} 
\left(\frac{1}{b\cdot e_{i(j)}}(\bar f_{i,\bar h}(x)) - \frac{1}{b\cdot e_{i(j)}}(\bar f_{i, h}(x)) \right) dx \right| \\
\le &~ (k+1)^2 |g(\bar h)-g(h)|.
\end{split}
\end{equation}
By plugging \eqref{E_hyp_rec}, \eqref{E_est1} and \eqref{E_est2} into \eqref{E_triangular}, we finally get \eqref{E_recursion}. 

The statement is therefore proven with $s=s_{\tilde N}$, $c_1=2(k+1)$ and $c_2= \tilde N(k+1)^2 (1+2 \|b\|_{L^\infty}) + 2(k+1)$.
\end{proof}

\begin{remark}\label{R_W1p}
If we additionally assume that $b \in W^{1,p}(\R^2)$ in Lemma \ref{L_main}, then the statement holds true with
\begin{equation*}
g(h):= \int_{\{H\le h\}\cap \Omega_{k+1}}|\nabla H| dz.
\end{equation*}
In particular we showed in the proof of Proposition \ref{P_local} that $g \in W^{1,p}_\loc(\R)$.
\end{remark}

\begin{proof}[Proof of Theorem \ref{T_global}.]
By Lemma \ref{L_Omega_k}, it is sufficient to prove that $X(t)\in \BV(\Omega_k)$ for every $k \in \N$.
In the same setting as in Lemma \ref{L_main}, if $\bar z \in \Omega_k$  and $z\in B_r(\bar z)$, then
\begin{equation}\label{E_est_global}
\begin{split}
|X(t,\bar z) - X(t,z)| \le &~ |X(t,\bar z) - X(s, z)| + |X(s,z)- X(t,z)| \\
\le &~ c_1  |H(\bar z) - H(z)| + \|b\|_{L^\infty} |t-s| \\
\le &~ c_1  |H(\bar z) - H(z)| + c_2 \|b\|_{L^\infty} \left( |g(H(\bar z)) - g(H(z))| + |\bar z -z|\right) \\
\le &~ \|b\|_{L^\infty}(c_1+c_2) |\bar z -z| +  c_2 \|b\|_{L^\infty} |g(H(\bar z)) - g(H(z))|.
\end{split}
\end{equation}
The argument in the proof of Corollary \ref{C_BV} shows that $g\circ H \in \BV(\Omega_k)$ therefore it follows from \eqref{E_est_global} that
$X(t) \in \BV(\Omega_k \cap B_r(z))$ for every $z \in \Omega_k$. Being $\Omega_k$ bounded this proves that $X(t) \in \BV(\Omega_k)$.
Finally the continuity of $X(t)$ follows immediately from \eqref{E_est_global} and the continuity of $g$.
If moreover we assume that $b \in W^{1,p}(\R^2)$, then the same argument proves that $X(t) \in W^{1,p}(\Omega_k)$ thanks to Remark \ref{R_W1p} and Corollary \ref{C_W1p}.
\end{proof}

\begin{remark}
By inspection in the argument used to prove Lemma \ref{L_main}, we observe that $\|X(t)\|_{\BV}(\Omega)$ (or $\|X(t)\|_{W^{1,p}}(\Omega)$)  is locally bounded for $t\in [0,+\infty)$ and it diverges at most linearly in $t$ as $t \to \infty$.
\end{remark}

\section{Example}
In this section we prove Proposition \ref{P_example}.

\subsection{Construction of $C_n$, $D_n$, $E_n$ and $F_n$}
We consider the following parameters:
\begin{equation}\label{E_parameters}
c_n = \frac{1}{n^22^n}, \qquad a_n = \frac{n-1}{2n}\left( \frac{c_n}{2}-c_{n+1}\right)\sim \frac{1}{2^{n-1}n^3}, \qquad   r_n = \frac{1}{2n}\left( \frac{c_n}{2}-c_{n+1}\right)\sim  \frac{1}{2^{n-1}n^4}. 
\end{equation}
We set $C_1= [0,1/2]^2\subset \R^2$ and we inductively define $C_{n+1}$ for $n\ge 1$ as follows:
$C_{n+1}\subset C_{n}$ and every connected component $R'$ of $C_{n}$ contains two connected components of $C_{n+1}$, which are squares of side $c_n$ as in Figure \ref{F_Cn}. 
For every $n\in \N$ we also consider the sets $D_n, E_n, F_n \subset C_n$ as in Figure \ref{F_Cn}.

\begin{figure}
\centering
\def\svgwidth{\columnwidth}
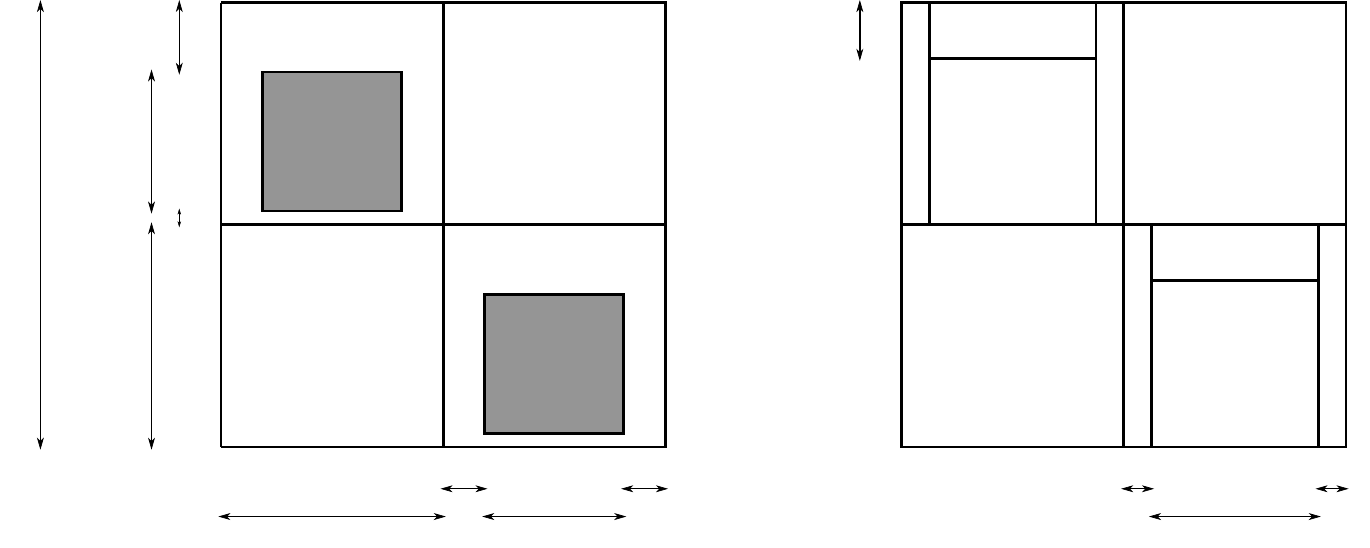
\caption{Construction of the set $C_{n+1}, D_n, E_n, F_n \subset C_n$. The two pictures represent a connected component of $C_n$.}\label{F_Cn}. 
\end{figure}

We observe that for every $n\ge 1$ it holds
\begin{equation*}
C_{n+1}= \{ z \in D_n : \dist (z,\partial D_n) \ge r_n\}.
\end{equation*}

\subsection{Construction of $f_n$ and $h_n$}

The function $f_0:\R^2 \to \R$ is defined by $f_0(x,y)=y$. The function $f_n$ coincides with $f_{n-1}$ on $\R^2 \setminus C_n$ and its level lines in $C_n$ are as in Figure \ref{F_lines}.
In particular $f_n$ coincides with $f_{n-1}$ on $F_n$.

\begin{figure}
\centering
\def\svgwidth{0.6\columnwidth}
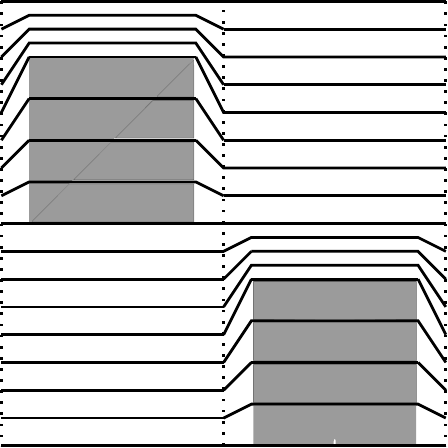
\caption{Level sets of $f_n$ on a connected component $R'$ of $C_n$. The set $D_n\cap R'$ is colored in gray.}\label{F_lines}.
\end{figure}

Let $R$ be a connected component of $D_n$, then $f_n$ is affine on $R$ and depends only on $y$, therefore $\nabla f_n=(0,v_n)$.
Let $R'$ be a connected component of $C_n$ and denote by $s_n=\Osc(f_n,R')$.
\begin{equation*}
v_n = \frac{\Osc(f_n,R)}{\h(R)} = \frac{\Osc(f_n,R')}{\h(R')},
\end{equation*}
where $\h(R)=c_{n+1}+2r_n$, $\h(R')=c_{n+1}$, $\Osc(f_n,R) = s_n/4$ and $ \Osc(f_n,R') = s_{n+1}$ so that
\begin{equation*}
4s_{n+1}= \frac{c_{n+1}}{c_{n+1}+2r_n}s_n.
\end{equation*}
In particular
\begin{equation*}
4^ns_n = 4 c_1 \prod_{l=2}^n\frac{c_l}{c_l+2r_{l-1}} \searrow 4c_1 \prod_{l=2}^\infty \frac{c_l}{c_l+2r_{l-1}}. 
\end{equation*}
From the choice \eqref{E_parameters} it follows that $r_{l-1} c_l^{-1}=O(l^{-2})$ therefore
\begin{equation*}
\log\left(\frac{c_l}{c_l+2r_{l-1}}\right) = O(l^{-2}).
\end{equation*}
In particular the infinite product is strictly positive and we denote it by $\sigma$. We finally get
\begin{equation*}
v_n = \frac{s_{n+1}}{c_{n+1}} \sim c_1\sigma\frac{n^2}{2^n}.
\end{equation*}

Similarly we compute the speed $\nabla f_n = (0,v_n')$ in the region $E_n$ as in the picture.
Denoting by $R''$ one of its components, we have
\begin{equation*}
v'_n  = \frac{\Osc(f_n,R'')}{\h(R'')} = \frac{s_n}{8a_n} \sim \frac{c_1\sigma}{4} \frac{n^3}{2^n}.
\end{equation*}

\subsection{Estimates on the norms of $\nabla f_n$ and $\nabla h_n$}
We first estimate $\|\nabla f_n\|_{L^\infty(C_n)}$. From Figure \ref{F_lines} we observe that $ \|\partial_2 f_n\|_{L^\infty(C_n)} = v'_n$
and the maximal slope of the level sets of $f_n$ in $C_n$ is $\frac{c_n - 8 a_n}{4a_n}$.
Therefore
\begin{equation*}
\|\nabla f_n\|_{L^\infty}(C_n) \le v'_n\left( \frac{c_n - 8 a_n}{4a_n} + 1\right) \sim \frac{c_1\sigma}{16}\frac{n^4}{2^n}.
\end{equation*}
Since $f_n$ and $f_{n-1}$ coincide outside $C_n$, it holds 
\begin{equation*}
\|\nabla h_n\|_{L^\infty}\le \|\nabla f_n\|_{L^\infty(C_n)} +  \|\nabla f_{n-1}\|_{L^\infty(C_{n-1})} = O \left(\frac{n^4}{2^n}\right).
\end{equation*} 
This proves that
\begin{equation*}
f = \lim_{n\to \infty}f_n = f_0 + \sum_{l=1}^\infty h_l
\end{equation*}
is a Lipschitz function.

\subsection{Estimate on crossing time}\label{Ss_crossing}

Let $T^1_n$ be the amount of time needed by an integral curve of the vector field $-\nabla^\perp f_{n-1}$ to cross a connected component of $C_n$.
Then
\begin{equation*}
T^1_n= \frac{c_n}{v_{n-1}}\sim \frac{1}{2c_1\sigma n^4}.
\end{equation*}
Let moreover $T^s_n$ be the amount of time needed by an integral curve of the vector field $-\nabla^\perp f_n$ intersecting $D_n$ to cross a connected component of $C_n$. 
Since $a_n+r_n=o(c_n)$, then $T^s_n$ is asymptotically equivalent at the sum of the amounts of time needed to cross a connected component of $D_n$ and a connected component of $F_n$, namely
\begin{equation*}
T^s_n \sim \frac{1}{2c_1\sigma n^4} + \frac{1}{4c_1\sigma n^4} .
\end{equation*}
Finally let $T^f_n$ be the amount of time needed by an integral curve of the vector field $-\nabla^\perp f_n$ intersecting $E_n$ to cross a connected component of $C_n$:
similarly as above we have the $T^f_n$ is asymptotically equivalent to the amount of  time needed to cross a connected component of $F_n$, namely
\begin{equation*}
T^f_n \sim \frac{1}{4c_1\sigma n^4}.
\end{equation*}
Let us denote by $X_n$ the flow of $-\nabla^\perp f_n$ and by $X$ the flow of $-\nabla^\perp f$. 
For every $z=(x,y) \in \R^2$ with $x<0$ we define $t_1(z)$ as the unique $t>0$ such that $X(t_1(x))\cdot e_1 = 0$ and 
$t_2(z)$ as the unique $t>0$ such that $X(t_1(x))\cdot e_1 = 1$. Since $f(x,y)=y$ for every $(x,y)\in \R^2$ with $x<0$, then the function $z\mapsto t_2(z)-t_1(z)$ depends only on $y$. We therefore set
\begin{equation*}
T(y):= t_2(-1,y)-t_1(-1,y) 
\end{equation*}
and we observe that for every $x<0$ it holds $T(y)=t_2(x,y)-t_1(x,y)$.
Let $z=(x,y) \in \R^2$ be such that $x <0$ and there exists $t>0$ for which $X_n(t,z) \in E_n$. Then, by construction, $X(t,z)=X_n(t,z)$ for every $t>0$ and therefore 
\begin{equation*}
T(y)= T_1 + \sum_{l=2}^{n-1} ( T^s_l - T^1_l) + T^f_n - T^1_n= : T_n
\end{equation*}

By construction there exist $0<y_1<y_2<\ldots < y_{2^{n-1}}<1$ such that for every $x<0$ and every $k \in [1, 2^{n-1}] \cap \N$ 
there exists $t=t(k)>0$ for which
\begin{equation*}
X(t,-1,y_{k}) \in E_n \quad \mbox{if $k$ is even}\qquad \mbox{and} \qquad X(t,-1,y_{2k-1})\in E_{n-1} \quad \mbox{if $k$ is odd}.
\end{equation*}
In particular
\begin{equation*}
\TV_{(0,1)} T \ge 2^{n-1} (T_n- T_{n-1}) = 2^{n-1}(T_n^f - T^1_n + T^s_{n-1}-T^f_{n-1}) \sim \frac{2^n}{8c_1 \sigma n^4}.
\end{equation*}
This shows that $T$ has not bounded variation.

\subsection{Regularity of the flow}\label{Ss_regularity}
We observe that the function $T$ constructed in the previous section is bounded:
indeed 
\begin{equation*}
\sup T = \sup_n T_n\le T_0 + \sum_{n=2}^\infty |T_n-T_{n-1}| \le 1 + C \sum_{n=1}^\infty \frac{1}{4c_1 \sigma n^4} < \infty
\end{equation*}
for some universal constant $C>0$. Since $f(x,y)= y$ for every $(x,y) \in \R^2 \setminus [0,1/2]^2$, for every $t> \sup T$ it holds
\begin{equation*}
X(t,x,y)\cdot e_1= x + 1 + t -T(y) \qquad \forall (x,y) \in (\sup T - t, 0) \times \R.
\end{equation*}
Since $T$ has not bounded variation, then $X(t) \notin \BV((-\e,0)\times (0,1/2))$ for every $\e>0$ and every $t> \sup T$.
If $R=[a,b]\times [c,d]$ denotes a connected component of $C_n$, the same argument as above shows that $X(t)\notin \BV((a-\e,a)\times (c,d))$ for every $\e>0$ and every $t> t_n$ for some $t_n\to 0$ as $n\to \infty$.
In particular $X(t)\notin \BV_\loc(\R^2;\R^2)$ for every $t>0$.
 
\subsection{More regular vector field}
The example constructed above does not prove Proposition \ref{P_example} since the vector field $b=-\nabla^\perp f$ has no Sobolev regularity.
In order to make the vector field more regular, we consider
\begin{equation*}
\tilde f = f_0 + \sum_{l=1}^\infty h_l \ast  \rho_l,
\end{equation*}
where
\begin{equation*}
\rho_l(z)=r_l^{-2} \rho(z/r_l)
\end{equation*}
and $\rho:\R^2 \to \R$ is a positive smooth function such that
\begin{enumerate}
\item $\supp \rho \subset B_{1/2}(0)$;
\item $\int_{\R^2}\rho = 1$ and $\int_{\R^2}z\rho(z)dz=0$. 
\end{enumerate} 

Let us first check that $\tilde f\in W^{1,p}_\loc(\R^2,\R^2)$ for every $p \in [1,\infty)$: indeed
\begin{equation*}
\begin{split}
\|\nabla^2 (h_l\ast \rho_l)\|_{L^p}\le &  ~ \|\nabla h_l \ast \nabla \rho_l\|_{L^p} \\
\le &~ \|\nabla h_l\|_{L^p}\|\nabla \rho_l\|_{L^1} \\
\le &~ \|\nabla h_l\|_{L\infty} \left(\L^2(\supp (\nabla h_l))\right)^{1/p}\|\nabla \rho_l\|_{L^1} \\
\le &~ O(l^4 2^{-l})  \left(\L^2(C_{l-1})\right)^{1/p} O(r_l^{-1}) \\
= &~ O(l^4 2^{-l}) O(2^{-l/p}l^{-4/p}) O(2^ll^4) \\
= &~ O(2^{-l/p}l^{8-\frac{4}{p}}).
\end{split}
\end{equation*}
Being $\|\nabla^2 (h_l\ast \rho_l)\|_{L^p}$ summable, the sequence 
\begin{equation*}
\tilde f_n:=  f_0 + \sum_{l=1}^n h_l \ast  \rho_l
\end{equation*}
 converges to $\tilde f$ in $W^{2,p}_\loc(\R^2)$ for every $p\in [1,+\infty)$.
We now prove that the same argument of Sections \ref{Ss_crossing} and \ref{Ss_regularity} for $f$ can be applied to $\tilde f$. 

Being $f_n$ affine on each connected component of $D_n$, it follows from the properties of the convolution kernel that
$h_n\ast \rho_n (z)= h_n(z)$ for every $z \in D_n$ such that $\dist (z,\partial D_n)> r_n$.
We denote by
\begin{equation*}
\begin{split}
\tilde D_n&:= \{x \in D_n: \dist (x,\partial D_n)>r_n\}, \\
\tilde E_n&:= \{x \in E_n: \dist (x,\partial E_n)>r_n\}, \\
\tilde F_n&:= \{x \in D_n: \dist (x,\partial F_n)>r_n\}.
\end{split}
\end{equation*}
Observe that all the sets above are non-empty by the choice of the parameters \eqref{E_parameters}.
Since $\tilde D_n=C_{n+1}$, we have in particular that $f_n= \tilde f_n$ on the set $C_{n+1}$.
Similarly $h_n\ast \rho_n = h_n$ on $\tilde E_n \cup \tilde F_n$.
As in Section \ref{Ss_crossing}, we denote by $\tilde T^1_n$ the total amount of time needed by an integral curve of the vector field 
$-\nabla^\perp \tilde f_{n-1}$ to cross a connected component of $C_n$. Since $f_{n-1}=\tilde f_{n-1}$ on $C_n$, then $\tilde T^1_n=T^1_n$.
We moreover denote by $\tilde T^s_n$ the amount of time needed by an integral curve of the vector field $-\nabla^\perp \tilde f_n$ intersecting $\tilde D_n$ to cross a connected component of $C_n$.
Since $\tilde f_n = f_n$ in $\tilde D_n \cup \tilde F_n$ it is straightforward to check that $\tilde T^s_n \sim T^s_n$.
Similarly, we denote $\tilde T^f_n$ the amount of time needed by an integral curve of the vector field $-\nabla^\perp\tilde  f_n$ intersecting $\tilde E_n$ to cross a connected component of $C_n$.
Since $\tilde f_n = f_n$ in $\tilde E_n \cup \tilde F_n$ it is straightforward to check that $\tilde T^f_n \sim T^f_n$.

We are now in position to repeat the argument in Sections \ref{Ss_crossing} and \ref{Ss_regularity} and this proves that for every $t>0$ the regular Lagrangian flow $\tilde X$ of the vector field $-\nabla^\perp \tilde f$ satisfies 
\begin{equation*}
\tilde X(t) \notin \BV_\loc(\R^2,\R^2).
\end{equation*}
This concludes the proof of Proposition \ref{P_example}.

\bibliographystyle{alpha}


\begin{thebibliography}{ABC14b}

\bibitem[ABC13]{ABC2}
G. Alberti, S. Bianchini, and G. Crippa.
\newblock Structure of level sets and {S}ard-type properties of {L}ipschitz
  maps.
\newblock {\em Ann. Sc. Norm. Super. Pisa Cl. Sci. (5)}, 12(4):863--902, 2013.

\bibitem[ABC14a]{ABC3}
G. Alberti, S. Bianchini, and G. Crippa.
\newblock On the {$L^p$}-differentiability of certain classes of functions.
\newblock {\em Rev. Mat. Iberoam.}, 30(1):349--367, 2014.

\bibitem[ABC14b]{ABC1}
G. Alberti, S. Bianchini, and G. Crippa.
\newblock A uniqueness result for the continuity equation in two dimensions.
\newblock {\em J. Eur. Math. Soc. (JEMS)}, 16(2):201--234, 2014.

\bibitem[AC08]{AC_bologna}
L. Ambrosio and G. Crippa.
\newblock Existence, uniqueness, stability and differentiability properties of
  the flow associated to weakly differentiable vector fields.
\newblock In {\em Transport equations and multi-{D} hyperbolic conservation
  laws}, volume~5 of {\em Lect. Notes Unione Mat. Ital.}, pages 3--57.
  Springer, Berlin, 2008.

\bibitem[AFP00]{AFP_book}
L.~Ambrosio, N.~Fusco, and D.~Pallara.
\newblock {\em Functions of Bounded Variation and Free Discontinuity Problems}.
\newblock Oxford Science Publications. Clarendon Press, 2000.

\bibitem[ALM05]{ALM_differentiability}
L. Ambrosio, M. Lecumberry, and S. Maniglia.
\newblock Lipschitz regularity and approximate differentiability of the
  {D}i{P}erna-{L}ions flow.
\newblock {\em Rend. Sem. Mat. Univ. Padova}, 114:29--50 (2006), 2005.

\bibitem[AM07]{AM_differentiability}
L.Ambrosio and J. Mal\'{y}.
\newblock Very weak notions of differentiability.
\newblock {\em Proc. Roy. Soc. Edinburgh Sect. A}, 137(3):447--455, 2007.

\bibitem[Amb04]{Ambrosio_transport}
L. Ambrosio.
\newblock Transport equation and {C}auchy problem for {$BV$} vector fields.
\newblock {\em Invent. Math.}, 158(2):227--260, 2004.

\bibitem[BB20]{BB_Bressan}
S. Bianchini and P. Bonicatto.
\newblock A uniqueness result for the decomposition of vector fields in {$\Bbb
  R^d$}.
\newblock {\em Invent. Math.}, 220(1):255--393, 2020.

\bibitem[BBG16]{BBG_NI2d}
S. Bianchini, P. Bonicatto, and N.~A. Gusev.
\newblock Renormalization for autonomous nearly incompressible {BV} vector
  fields in two dimensions.
\newblock {\em SIAM J. Math. Anal.}, 48(1):1--33, 2016.

\bibitem[BDN20]{BD_differentiability}
S.~Bianchini and N.~De~Nitti.
\newblock Differentiability in measure of the flow associated with a nearly
  incompressible BV vector field.
\newblock {\em preprint SISSA 25/MATE}, 2020.

\bibitem[BG16]{BG_steadyNI}
S.~Bianchini and N.~A. Gusev.
\newblock Steady nearly incompressible vector fields in two-dimension: chain
  rule and renormalization.
\newblock {\em Arch. Ration. Mech. Anal.}, 222(2):451--505, 2016.

\bibitem[BM19]{BM_Lusin-Lip}
P.~Bonicatto and E.~Marconi.
\newblock Regularity estimates for the flow of BV autonomous divergence-free
  vector fields in $\R^2$.
\newblock {\em arXiv:1910.03277v2}, 2019.

\bibitem[CDL08]{CDL_DiPerna-Lions}
G. Crippa and C. De~Lellis.
\newblock Estimates and regularity results for the {D}i{P}erna-{L}ions flow.
\newblock {\em J. Reine Angew. Math.}, 616:15--46, 2008.

\bibitem[CT20]{CT_Lie}
M.~Colombo and R.~Tione.
\newblock On the commutativity of flows of rough vector fields.
\newblock {\em arXiv:2011.08133v1}, 2020.

\bibitem[DL89]{DL_transport}
R.~J. DiPerna and P.-L. Lions.
\newblock Ordinary differential equations, transport theory and {S}obolev
  spaces.
\newblock {\em Invent. Math.}, 98(3):511--547, 1989.

\bibitem[DL07]{DL_notes}
C. De~Lellis.
\newblock Notes on hyperbolic systems of conservation laws and transport
  equations.
\newblock In {\em Handbook of differential equations: evolutionary equations.
  {V}ol. {III}}, Handb. Differ. Equ., pages 277--382. Elsevier/North-Holland,
  Amsterdam, 2007.

\bibitem[Jab16]{Jabin_example}
P.\, E. Jabin.
\newblock Critical non-{S}obolev regularity for continuity equations with rough
  velocity fields.
\newblock {\em J. Differential Equations}, 260(5):4739--4757, 2016.

\bibitem[LBL04]{LL_differentiability}
C.~Le~Bris and P.-L. Lions.
\newblock Renormalized solutions of some transport equations with partially
  {$W^{1,1}$} velocities and applications.
\newblock {\em Ann. Mat. Pura Appl. (4)}, 183(1):97--130, 2004.

\end{thebibliography}

\end{document}